\definecolor{blau}{rgb}{0.1,0.0,0.9}
\definecolor{purple}{rgb}{0.4,0.0,0.9} 
\definecolor{gruen}{cmyk}{1.0,0.2,0.7,0.07}
\definecolor{mag}{cmyk}{0.0,0.9,0.3,0.0}
\newcounter{komcounter}
\numberwithin{komcounter}{section}
\newcommand{\klockan}{\the\hours:{\ifnum\minutes<10 0\fi}\the\minutes}
\newcommand{\tid}{\today\ \klockan}
\newcommand{\prtid}{\smash{\raise 10mm \hbox{\LaTeX ed \tid}}}
\def\sectionmark#1{} 
\def\subsectionmark#1{}
\newcommand{\sectnr}{\ifnum \c@secnumdepth >\z@
                 \thesection.\hskip 1em\relax \fi}
\def\tableofcontents{\section*{Contents} 
 \@starttoc{toc}}
\def\@biblabel#1{#1.}
\let\Thebibliography=\thebibliography
\renewcommand{\thebibliography}[1]{\def\@mkboth##1##2{}\Thebibliography{#1}
\addcontentsline{toc}{section}{References}
\frenchspacing 
\setlength{\@topsep}{0pt}
\setlength{\itemsep}{0pt}%
\setlength{\parskip}{0pt plus 2pt}%
}
\def\mdots@{\mathinner.\nonscript\!.%
 \ifx\next,.\else\ifx\next;.\else\ifx\next..\else
 \nonscript\!\mathinner.\fi\fi\fi}
\let\ldots\mdots@
\let\cdots\mdots@
\let\dotso\mdots@
\let\dotsb\mdots@
\let\dotsm\mdots@
\let\dotsc\mdots@
\def\vdots{\vbox{\baselineskip2.8\p@ \lineskiplimit\z@
    \kern6\p@\hbox{.}\hbox{.}\hbox{.}\kern3\p@}}
\def\ddots{\mathinner{\mkern1mu\raise8.6\p@\vbox{\kern7\p@\hbox{.}}%
    \raise5.8\p@\hbox{.}\raise3\p@\hbox{.}\mkern1mu}}
\let\Enumerate=\enumerate
\renewcommand{\enumerate}{\Enumerate%
\setlength{\@topsep}{0pt}
\setlength{\itemsep}{0pt}%
\setlength{\parskip}{0pt plus 1pt}%
\renewcommand{\theenumi}{\textup{(\alph{enumi})}}%
\renewcommand{\labelenumi}{\theenumi}%
}
\let\endEnumerate=\endenumerate
\renewcommand{\endenumerate}{\endEnumerate\unskip}
\def\@seccntformat#1{\csname the#1\endcsname.\quad}
\long\def\@makecaption#1#2{%
  \vskip\abovecaptionskip
  \sbox\@tempboxa{ #1. #2}%
  \ifdim \wd\@tempboxa >\hsize
    #1. #2\par
  \else
    \global \@minipagefalse
    \hb@xt@\hsize{\hfil\box\@tempboxa\hfil}%
  \fi
  \vskip\belowcaptionskip}
\newtheoremstyle{descriptive}%
{ \medskipamount}          
  {}   
  {\rmfamily} 
  {}          
  {\bfseries} 
  {.}         
  { }         
  {}          
\newtheoremstyle{propositional}%
 {\medskipamount}          
   {\medskipamount}          
  {\itshape}  
  {}          
  {\bfseries} 
  {.}         
  { }         
  {}          
\newtheoremstyle{remarkstyle}%
 {\medskipamount}          
  {\medskipamount}          
  {\rmfamily}  
  {}          
  {\itshape} 
  {.}         
  { }         
  {}          
\theoremstyle{propositional}
\newtheorem{thm}{Theorem}[section]
\newtheorem{theorem}[thm]{Theorem}
\newtheorem{lemma}[thm]{Lemma}
\newtheorem{corollary}[thm]{Corollary}
\theoremstyle{descriptive}
\newtheorem{remark}[thm]{Remark}
\renewenvironment{proof}[1][\proofname]{\par
  \pushQED{\qed}%
  \normalfont
  \trivlist
  \item[\hskip\labelsep
        \itshape
    #1\@addpunct{.}]\ignorespaces
}{%
  \popQED\endtrivlist\@endpefalse
}
\newcommand{\AppendicesFromNowOn}
  {\renewcommand \thesection {\@Alph\c@section}  
\newcommand\Appendix{\@startsection {section}{1}{\z@}%
                                   {-3.5ex \@plus -1ex \@minus -.2ex}%
                                   {2.3ex \@plus.2ex}%
                                   {\noindent\normalfont\Large\bfseries Appendix }}
   \setcounter{section}{0}
  }
\def\vint{\mathop{\mathchoice%
          {\setbox0\hbox{$\displaystyle\intop$}\kern 0.22\wd0%
           \vcenter{\hrule width 0.6\wd0}\kern -0.82\wd0}%
          {\setbox0\hbox{$\textstyle\intop$}\kern 0.2\wd0%
           \vcenter{\hrule width 0.6\wd0}\kern -0.8\wd0}%
          {\setbox0\hbox{$\scriptstyle\intop$}\kern 0.2\wd0%
           \vcenter{\hrule width 0.6\wd0}\kern -0.8\wd0}%
          {\setbox0\hbox{$\scriptscriptstyle\intop$}\kern 0.2\wd0%
           \vcenter{\hrule width 0.6\wd0}\kern -0.8\wd0}}%
          \mathopen{}\int}
\def\vintslides{\mathop{\mathchoice%
          {\setbox0\hbox{$\displaystyle\intop$}\kern 0.22\wd0%
           \vcenter{\hrule height 0.04em width 0.6\wd0}\kern -0.82\wd0}%
          {\setbox0\hbox{$\textstyle\intop$}\kern 0.2\wd0%
           \vcenter{\hrule height 0.04em width 0.6\wd0}\kern -0.8\wd0}%
          {\setbox0\hbox{$\scriptstyle\intop$}\kern 0.2\wd0%
           \vcenter{\hrule height 0.04em width 0.6\wd0}\kern -0.8\wd0}%
          {\setbox0\hbox{$\scriptscriptstyle\intop$}\kern 0.2\wd0%
           \vcenter{\hrule height 0.04em width 0.6\wd0}\kern -0.8\wd0}}%
          \mathopen{}\int}
{\catcode`p =12 \catcode`t =12 \gdef\eeaa#1pt{#1}}      
\def\accentadjtext#1{\setbox0\hbox{$#1$}\kern   
                \expandafter\eeaa\the\fontdimen1\textfont1 \ht0 }
\def\accentadjscript#1{\setbox0\hbox{$#1$}\kern 
                \expandafter\eeaa\the\fontdimen1\scriptfont1 \ht0 }
\def\accentadjscriptscript#1{\setbox0\hbox{$#1$}\kern   
                \expandafter\eeaa\the\fontdimen1\scriptscriptfont1 \ht0 }
\def\accentadjtextback#1{\setbox0\hbox{$#1$}\kern       
                -\expandafter\eeaa\the\fontdimen1\textfont1 \ht0 }
\def\accentadjscriptback#1{\setbox0\hbox{$#1$}\kern     
                -\expandafter\eeaa\the\fontdimen1\scriptfont1 \ht0 }
\def\accentadjscriptscriptback#1{\setbox0\hbox{$#1$}\kern 
                -\expandafter\eeaa\the\fontdimen1\scriptscriptfont1 \ht0 }
\newdimen\extrawidth
\def\iintlim#1#2{\setbox0\hbox{$\scriptstyle#1$}%
	\setbox1\hbox{$\scriptstyle#2$}%
	\extrawidth=\wd1 \advance\extrawidth-\wd0
	\ifdim\extrawidth<0pt \extrawidth=0pt\fi%
	\int_{#1\kern\extrawidth \kern .5em}^{#2\kern -\wd1} \kern -.5em%
}
\numberwithin{equation}{section}
\renewcommand{\phi}{\varphi}
\newcommand{\eps}{\varepsilon}
\renewcommand{\l}{\left}
\renewcommand{\r}{\right}
\newcommand{\R}{\mathbb{R}}
\newcommand{\Rn}{\mathbb{R}^d}
\newcommand{\abs}[1]{\left| #1 \right|}
\newcommand{\beq}{\begin{equation}}
\newcommand{\eeq}{\end{equation}}
\title[Parabolic Harnack inequality for non-local diffusion]{On the parabolic Harnack inequality for non-local diffusion equations}
\author[Dier, Kemppainen, Siljander, Zacher]{Dominik Dier, Jukka Kemppainen, Juhana Siljander and Rico Zacher}
\begin{document}

\subjclass[2010]{Primary 35R11. Secondary 45K05, 45M05, 35C15, 26A33, 35B40, 33C60}

\keywords{non-local diffusion, Harnack inequality, Riemann-Liouville derivative, fractional Laplacian, fundamental solution, $H$ functions, asymptotics}

\begin{abstract}
We settle the open question concerning the Harnack inequality for globally positive solutions to non-local in time diffusion equations by constructing a counter-example for dimensions \(d\ge\beta\), where \(\beta\in(0,2]\) is the order of the equation with respect to the spatial variable. The equation can be non-local both in time and in space but for the counter-example it is important that the equation has a fractional time derivative. In this case, the fundamental solution is singular at the origin for all times \(t>0\) in dimensions \(d\ge\beta\). This underlines the markedly different behavior of time-fractional diffusion compared to the purely space-fractional case, where a local Harnack inequality is known. 

The key observation is that the memory strongly affects the estimates. In particular, if the initial data $u_0 \in L^q_{loc}$ for $q$ larger 
than the critical value $\tfrac d\beta$ of the elliptic operator \((-\Delta)^{\beta/2}\), a non-local version of 
the Harnack inequality is still valid as we show.

We also observe the {\em critical dimension phenomenon} already known from other contexts: the diffusion behavior is substantially different in higher dimensions than $d=1$ provided \(\beta>1\), since we prove that the local Harnack inequality holds  if \(d<\beta\).
\end{abstract}

\maketitle

\section{Introduction}

We construct a counter-example for the local Harnack inequality for globally positive solutions to the time-fractional heat equation
\begin{equation}\label{equation}
\partial_t^\alpha(u-u_0) +(- \Delta)^{\beta/2} u = 0, \quad 0 < \alpha <1,\quad 0<\beta\le 2,
\end{equation}
in $(0,T) \times \Rn$ for $d \ge \beta$. We also show that the result is 
still valid in one dimension provided \(\beta>1\), at least for global solutions. Instead of the local formulation, a non-local version of 
the Harnack inequality  is true also in higher dimensions. Indeed, the initial data plays a role, as 
the result remains true if the superharmonic function induced by the nonnegative initial 
data satisfies the Harnack principle. Thus, the memory has a substantial effect on 
the properties of the solutions, a phenomenon not seen in the case of the 
space-fractional diffusion equation
\begin{equation}\label{space_fractional}
\partial_t u + (-\Delta)^{\beta/2} u =0.
\end{equation}

In particular, if $u_0 \in L^q_{loc}$ for  $q > \tfrac d\beta$, we prove a version of the 
classical local Harnack principle with the constant depending on $u_0$. We also observe the {\em critical dimension 
phenomenon}, which is not seen in the case of  equation~\eqref{space_fractional}, but is 
known for time-fractional diffusion from~\cite{KempSiljVergZach16, KempSiljZach16}. These 
results underline the fact that equation~\eqref{equation} has a substantially different 
behavior from that of~\eqref{space_fractional}. The phenomena we see, as well as the 
methods used, are often completely different from the ones for 
equation~\eqref{space_fractional}.

In the purely space-fractional case the Harnack inequality is known and was first proven by heat kernel estimates by Bass and Levin in~\cite{BassLevi02} as well as Chen and Kumagai in~\cite{ChenKuma03}. Very recently a new proof for viscosity solutions was given by Chang-Lara and D\'avila in~\cite{ChLaDavi16}. In the variational formulation the question has been studied for kernels vanishing outside the local diagonal in~\cite{BarlBassChenKass09} and for viscosity solutions when the Harnack constant is allowed to depend on the non-local tail in~\cite{KimLee13}. For elliptic theory we refer to the work of Caffarelli and Silvestre in~\cite{CaffSilv07, CaffSilv09} and Kassmann in~\cite{Kass11} and the references given therein. See also~\cite{BonfVazq13, CaffSilv12}.

The study of equation~\eqref{equation} is firmly rooted in applications. The 
time-fractional diffusion equation is closely related to a so-called continuous time random 
walk (CTRW) model for particle diffusion and it has become one of the standard physics 
approaches to model anomalous diffusion 
processes~\cite{HenrLangWear06,MeerSiko10,MetzKlaf00,MeerBensScheBeck02}. For a 
detailed derivation of this equation from physics principles and for further applications 
of such models we refer to the expository review article of Metzler and Klafter 
in~\cite{MetzKlaf00}. For the connection of~\eqref{equation} to the related 
stochastistic process we refer to monograph~\cite{MeerSiko10} and the extensive bibliography 
therein. Moreover, equations of the form (\ref{equation}) and nonlinear variants of them 
appear also in modelling
of dynamic processes in materials with {\em memory}. Examples are given by the theory of heat
conduction with memory, see~\cite{Prus13} and the references therein, and the
diffusion of fluids in porous media with memory~\cite{Capu99, Jaku02}.

We point out that the non-local in time term in (\ref{equation}), with $\partial_t^\alpha$ being the Riemann-Liouville
fractional derivation operator, coincides (for sufficiently smooth $u$) with the Caputo fractional derivative of $u$,
see (\ref{caputo}) below. The formulation with the Riemann-Liouville fractional 
derivative has the
advantage that a priori less regularity is required on $u$ to define the nonlocal operator. In particular, our formulation is exactly the one which naturally arises from physics applications, see for instance~\cite[equation (40)]{MetzKlaf00}.

\section{Preliminaries and main results} \label{section:preliminaries}

\subsection{Notations and definitions}

Let us first fix some notations. The Riemann-Liouville fractional integral of order \(\alpha \ge 0\) is defined for $\alpha = 0$ as \(J^0:=I\), where \(I\) denotes the identity operator, and for $\alpha >0$ as
\[
J^\alpha 
f(t)=\frac{1}{\Gamma(\alpha)}\int_0^t(t-\tau)^{\alpha-1}f(\tau)d\tau = (g_\alpha * f)(t),
\]
where 
\[
g_\alpha(t)=\frac{t^{\alpha-1}}{\Gamma(\alpha)}
\]
is the Riemann-Liouville kernel and $*$ denotes the convolution in time on the positive half-line. We denote the 
convolution in space by $\star$. 

The Riemann-Liouville fractional derivative of order $0<\alpha  < 1$ is 
defined by
\[
\partial_t^{\alpha}f(t)=\frac{d}{dt} J^{1-\alpha}f(t)=\frac{d}{dt}\frac{1}{\Gamma(1-\alpha)}
\int_0^ { t } (t-\tau)^ { -\alpha } f(\tau)d\tau.
\]
Observe that for sufficiently smooth $f$ and $\alpha\in (0,1)$
\begin{equation} \label{caputo}
\partial_t^{\alpha}(f-f(0))(t)=(J^{1-\alpha} f')(t)=:{}^cD_t^\alpha f(t),
\end{equation}
the so-called Caputo fractional derivative of $f$.
In case $\alpha=1$, we have the standard time derivative. 

Let 
\[
\widehat{u}(\xi)=\mathcal{F}(u)(\xi)=(2\pi)^{-d/2}\int_{\Rn}\mathrm{e}^{
-ix\cdot\xi}f(x)dx
\] 
and
\[
\mathcal{F}^{-1}(u)(\xi):=\mathcal{F}(u)(-\xi)
\]
denote the Fourier and inverse Fourier transform of $u$, respectively. 

%
%
%
%
%
%

\subsection{The fundamental solution}
We recall that under suitable conditions on the initial data the classical solution of the Cauchy problem
\begin{equation}\label{cauchy_problem}
\left\{\begin{array}{r@{\,\,=\,\,}l}
\partial_t^\alpha(u-u_0) +(-\Delta)^{\beta/2} u & 0 \\
u(0,x) & u_0(x)
\end{array}\right.
\end{equation}
is given by the representation formula
\begin{equation}\label{solution_repr}
u(t,x) = \int_{\Rn} Z(t, x-y ) u_0 (y) \, dy
\end{equation}
where $Z$ is the fundamental solution of the problem. We call the function \(u\) defined 
by~\eqref{solution_repr} the {\em mild solution} of the Cauchy 
problem~\eqref{cauchy_problem} whenever the integral in~\eqref{solution_repr} is well 
defined. We remark that under appropriate regularity conditions on the data, existence and 
uniqueness of strong $L^p$-solutions of the Cauchy problem~\eqref{cauchy_problem} follows 
from the results in~\cite{Zach05}, which are formulated in the framework
of abstract parabolic Volterra
equations, see also the monograph~\cite{Prus13}.

The fundamental solution \(Z\) can be expressed in terms of Fox $H$-functions which are quite a general class of special functions. Without going to excessive details concerning the theory of Fox 
$H$-functions, for which  we refer the reader to~\cite{KilbSaig04}, we just give a very brief review, see also~\cite{KempSiljZach16}. 

To simplify the notation we introduce
\[
(a_i,\alpha_i)_{1,p}:=((a_1,\alpha_1),(a_{2},\alpha_{2}),\dots,(a_p,
\alpha_p))
\]
for the set of parameters appearing in the definition of Fox $H$-functions, which are defined via Mellin-Barnes type
integrals as
\[
H^{mn}_{pq} (z):= H^{mn}_{pq}\big[ z \big| \begin{smallmatrix}
                           (a_i,\alpha_i)_{1,p}\\
(b_j,\beta_j)_{1,q}                 
                                           \end{smallmatrix}
\big] =\frac{1}{2\pi i}\int_{\mathcal{L}}\mathcal{H}^{mn}_{pq}(s)z^{-s} 
ds,
\]
where 
\[
\mathcal{H}^{mn}_{pq}(s)=\frac{\prod_{j=1}^m \Gamma(b_j+\beta_j s)\prod_{i=1}^n 
\Gamma(1-a_i-\alpha_i s)}{\prod_{i=n+1}^p\Gamma(a_i+\alpha_i 
s)\prod_{j=m+1}^q\Gamma(1-b_j-\beta_j s)}
\]
is the Mellin transform of \(H_{pq}^{mn}\) and 
\(\mathcal{L}\) is an infinite contour in the complex plane which separates 
the poles
\[
b_{jl}=\frac{-b_j-l}{\beta_j}\quad (j=1,\dots,m;\, l=0,1,2,\dots)
\]
of the Gamma function \(\Gamma(b_j+\beta_j s)\) to the left of \(\mathcal{L}\) 
and the poles
\[
a_{ik}=\frac{1-a_i+k}{\alpha_i} \quad (i=1,\dots,n;\, k=0,1,2,\dots)
\]
of $\Gamma(1-a_i-\alpha_i s)$ to the right of \(\mathcal{L}\).

The fundamental solution satisfying the Cauchy problem~\eqref{cauchy_problem} with the 
initial datum \(u_0\) given by the Dirac delta distribution, \(u_0(x)=\delta(x)\), can be 
represented in terms of a Fox $H$-function as
\begin{equation}\label{fund_sol_repr}
Z(t,x)=\pi^{-d/2}|x|^{-d}H_{23}^{21}\bigl[ 2^{-\beta} 
t^{-\alpha}|x|^{\beta} \big| \begin{smallmatrix} (1,1), 
&(1, \alpha)\\ 
(\frac{d}{2},\frac{\beta}{2}),& (1,1 ), &(1, \frac{\beta}{2})
\end{smallmatrix} \big].
\end{equation}

Our analysis requires the sharp asymptotics of the fundamental solution, which is collected into the following lemma. For the proof we refer to~\cite[Lemma 3.3]{KempSiljZach16}. See also~\cite[equation (3.7)]{EideKoch04} 
and~\cite[Theorem 1.10]{KilbSaig04}.

\begin{lemma}\label{fund_sol_asympt}

Let $\alpha\in(0,1]$, $\beta\in(0,2]$ and $d \in \mathbb{Z}_+$. Denote \(R:=2^{-\beta}|x|^\beta 
t^{-\alpha}\). The function $Z$ has the following asymptotic behavior:
\begin{itemize}

\item[(i)] If \(R\leq 1\), then
\[
Z(t,x)\sim \begin{cases}
             t^{-\alpha d/\beta}, &\textrm{if $\alpha=1$, or $\beta > d$ and 
$0<\alpha<1$},\\
t^{-\alpha}(|\log(|x|^\beta t^{-\alpha})|+1), &\textrm{if $\beta=d$ and  
$0<\alpha <1$},\\
t^{-\alpha}|x|^{-d+\beta} &\textrm{if $0<\beta < d$ and $0<\alpha<1$}.
            \end{cases}
\]

\item[(ii)] If \(R\ge 1\), then 
\[
Z(t,x)\sim 
\begin{cases}
t^{\alpha}|x|^{-d-\beta}, &\textrm{if \(\beta<2\)},\\
t^{-\frac{\alpha d}2}R^{\frac{d(\alpha-1)}{2(2-\alpha)}}e^{-\sigma 
R^{\frac1{2-\alpha}}}, &\textrm{if \(\beta=2\)},
\end{cases}
\]
where \(\sigma=(2-\alpha)\left(\alpha^\alpha/4\right)^{\frac{1}{2-\alpha}}\).
\end{itemize}

\end{lemma}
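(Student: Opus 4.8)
The plan is to reduce everything to the known asymptotics of the Fox $H$-function $H^{21}_{23}$ appearing in the representation~\eqref{fund_sol_repr}, and to split the argument into the small-argument regime $R\le 1$ and the large-argument regime $R\ge 1$. First I would recall from the theory of Mellin--Barnes integrals (as developed in~\cite{KilbSaig04}) that the behavior of $H^{mn}_{pq}(z)$ as $z\to0^+$ is governed by the rightmost poles of the integrand lying to the left of the contour $\mathcal L$ — here the poles of $\Gamma(b_j+\beta_j s)$ — and that the behavior as $z\to\infty$ is governed by the leftmost poles lying to the right of $\mathcal L$, namely the poles of $\Gamma(1-a_i-\alpha_i s)$, together with the exponential-type contribution coming from the product of Gammas when the parameter $\Delta=\sum\beta_j-\sum\alpha_i$ vanishes. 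With the specific parameters $(a_i,\alpha_i)=((1,1),(1,\alpha))$ and $(b_j,\beta_j)=((\tfrac d2,\tfrac\beta2),(1,1),(1,\tfrac\beta2))$, one computes $\Delta$ and checks that $\Delta=0$ precisely when $\beta=2$, which is what produces the Gaussian-type tail in case (ii); for $\beta<2$ the decay is only algebraic.

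For the small-argument regime (i), with $z=R$, I would shift the contour to the right and collect residues. The relevant left poles come from $\Gamma(\tfrac d2+\tfrac\beta2 s)$ at $s=-d/\beta-2\ell/\beta$ and from $\Gamma(1+\tfrac\beta2 s)$ at $s=-2/\beta-2\ell/\beta$ (the factor $\Gamma(1+s)$ plays a role only through cancellations). The leading term is $z^{-s_0}$ with $s_0$ the rightmost such pole; plugging $s_0=-d/\beta$ gives a contribution of order $|x|^{-d}R^{d/\beta}=|x|^{-d}\cdot 2^{-d}|x|^d t^{-\alpha d/\beta}\sim t^{-\alpha d/\beta}$, which matches the first line. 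When $\beta=d$ the pole of $\Gamma(\tfrac d2+\tfrac\beta2 s)$ at $s=-1$ collides with the pole of $\Gamma(1+\tfrac\beta2 s)$, producing a double pole whose residue carries a logarithmic factor $|\log z|$ — this yields the $t^{-\alpha}(|\log(|x|^\beta t^{-\alpha})|+1)$ term. When $\beta<d$ the rightmost contributing pole is instead at $s=-1$, giving $|x|^{-d}R\sim t^{-\alpha}|x|^{-d+\beta}$. The case $\alpha=1$ must be treated separately because then $\Gamma(1+\alpha s)=\Gamma(1+s)$ cancels against a denominator factor, collapsing $H^{21}_{23}$ to a simpler $H$-function (essentially the one representing the Gaussian/stable density), and one recovers $t^{-\alpha d/\beta}=t^{-d/\beta}$ in all dimensions.

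For the large-argument regime (ii) with $\beta<2$, I would shift the contour to the left and pick up the poles of $\Gamma(1-a_i-\alpha_i s)$. The pole of $\Gamma(1-(1)-(1)s)=\Gamma(-s)$ at $s=0$ is cancelled, so the leading term comes from the pole of $\Gamma(1-1-\alpha s)=\Gamma(-\alpha s)$ at $s=1/\alpha$ (and from $\Gamma(-s)$ at $s=1$), giving the rate $|x|^{-d}R^{1/\alpha}$ versus $|x|^{-d}R$; comparing exponents of $|x|$ one sees $R^{1/\alpha}$ wins since $\beta/\alpha>\beta$, and $|x|^{-d}R^{-1}$... wait — more carefully, decay means we want the term that goes to zero slowest, i.e.\ the smallest power of $R$; one checks this is $s=1$, i.e.\ $|x|^{-d}R^{-1}\cdot(\dots)$, actually the correct bookkeeping gives $Z\sim |x|^{-d}R\cdot t^{?}$; I would carry out the residue computation and match against $t^\alpha|x|^{-d-\beta}$, noting $|x|^{-d}\cdot 2^{-\beta}|x|^\beta t^{-\alpha}$ has the wrong sign of $t$, so the surviving pole must be the one giving $R^{-1}$, namely $|x|^{-d}R^{-1}=|x|^{-d}2^{\beta}|x|^{-\beta}t^{\alpha}\sim t^\alpha|x|^{-d-\beta}$. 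For $\beta=2$, since $\Delta=0$, the contour integral does not terminate as a convergent residue series; instead one uses the standard exponential asymptotics for $H$-functions with $\Delta=0$ (Theorem~1.10 of~\cite{KilbSaig04} or~\cite[(3.7)]{EideKoch04}), which after a saddle-point/steepest-descent analysis of $\mathcal H^{21}_{23}(s)z^{-s}$ yields the stretched-exponential $e^{-\sigma R^{1/(2-\alpha)}}$ with the stated constant $\sigma$ and the algebraic prefactor $R^{d(\alpha-1)/(2(2-\alpha))}$.

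The main obstacle is the exponential asymptotics in the case $\beta=2$: getting the precise constant $\sigma=(2-\alpha)(\alpha^\alpha/4)^{1/(2-\alpha)}$ and the exact power of $R$ in the prefactor requires a careful saddle-point analysis using Stirling's formula on the ratio of Gamma functions, tracking all the polynomial corrections — this is exactly where~\cite[Lemma 3.3]{KempSiljZach16} does the heavy lifting, so in our write-up I would simply invoke that reference rather than reproduce the steepest-descent computation. The secondary subtlety is the borderline case $\beta=d$ with the logarithmic correction, where one must verify that the double pole genuinely contributes $|\log z|$ and not merely $O(1)$, and that the $+1$ in $(|\log(|x|^\beta t^{-\alpha})|+1)$ correctly absorbs the subleading residues; this is a routine but slightly delicate residue calculation. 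Since the lemma is quoted verbatim from~\cite{KempSiljZach16}, the cleanest path is to state that the proof is given there and only indicate the structure above for the reader's orientation.
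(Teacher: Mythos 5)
Your bottom line---invoking \cite[Lemma~3.3]{KempSiljZach16}---is exactly what the paper does; it gives no proof of its own and simply points to that reference together with \cite{EideKoch04} and \cite[Theorem~1.10]{KilbSaig04}. So at the level of what actually gets written down, the approaches coincide.

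That said, the orientation sketch you offer contains several bookkeeping errors that would derail a reader trying to reconstruct the argument, and they are worth correcting. With $m=2$, $n=1$, the Mellin transform of the $H$-function in~\eqref{fund_sol_repr} is
\[
\mathcal H^{21}_{23}(s)=\frac{\Gamma\!\left(\tfrac d2+\tfrac\beta2 s\right)\Gamma(1+s)\,\Gamma(-s)}{\Gamma(1+\alpha s)\,\Gamma\!\left(-\tfrac\beta2 s\right)},
\]
so the numerator factors producing left poles are $\Gamma(\tfrac d2+\tfrac\beta2 s)$ and $\Gamma(1+s)$, with poles at $s=-\tfrac d\beta-\tfrac{2\ell}\beta$ and $s=-1-\ell$, $\ell\ge0$, respectively. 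There is no numerator factor $\Gamma(1+\tfrac\beta2 s)$: the pair $(1,\tfrac\beta2)$ has index $j=3>m$ and contributes $\Gamma(-\tfrac\beta2 s)$ in the \emph{denominator}. Moreover, $\Gamma(1+s)$ is not merely a cancellation device: for $\beta<d$ its pole at $s=-1$ is the rightmost left pole and alone produces $t^{-\alpha}|x|^{\beta-d}$, and for $\beta=d$ it is precisely this pole that collides with $s=-d/\beta$ of the first factor to give the double pole and the logarithm. Next, the criterion $\Delta=\sum\beta_j-\sum\alpha_i=0$ is not what singles out $\beta=2$: here $\Delta=\beta-\alpha$, which does not vanish at $\beta=2$. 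What actually happens at $\beta=2$ is that $\Gamma(-s)/\Gamma(-\tfrac\beta2 s)\equiv 1$, so the unique numerator factor with right poles disappears, there is no algebraic expansion at infinity, and one is forced into the Stirling/saddle-point analysis that produces the stretched exponential. Finally, for $\beta<2$ the large-$R$ asymptotics come from the single surviving right pole of $\Gamma(-s)$ at $s=1$---there is no $\Gamma(-\alpha s)$ in the numerator and hence no competing pole at $s=1/\alpha$---and its residue gives $|x|^{-d}R^{-1}\sim t^\alpha|x|^{-d-\beta}$ directly. With these corrections your outline is a faithful summary of what \cite{KempSiljZach16} and \cite{KilbSaig04} actually carry out.
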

In Lemma~\ref{fund_sol_asympt} and in the sequel we use the notation
\[
a\sim b\quad\textrm{if there exists a constant $c>0$ such that}\quad \frac{1}{c} a\le b\le c a.
\]
Lemma~\ref{fund_sol_asympt} guarantees that we have not only upper bounds but also lower bounds for the fundamental solution \(Z\). The upper and the lower bounds are the same up to some multiplicative constants. In our analysis we are not interested in the exact constants. Therefore we give our estimates in a form \(a\le C b\), or even more briefly in a form \(a\lesssim b\).

Although the Fox $H$-functions are rather complicated objects, the Fourier transform with 
respect to the spatial variable of the fundamental solution \(Z(t,\cdot)\) takes a much simpler form. In fact,
\begin{equation}\label{fund_fourier_trans}
\widehat{Z}(t,\xi)=(2\pi)^{-d/2} E_{\alpha}(-\abs{\xi}^\beta t^{\alpha}),
\end{equation}
where $E_{\alpha}$ is the Mittag-Leffler function, which is defined by
\[
E_{\alpha}(z)=\sum_{k=0}^\infty \frac{z^k}{\Gamma(1+\alpha k)},\quad z\in \mathbb{C}.
\]
It is known that for \(0<\alpha<1\) the Mittag-Leffler function behaves as
\begin{equation}\label{ML2_estimate}
E_{\alpha}(-x) \sim \frac{1}{1+x}, \quad x\in\mathbb{R}_+.
\end{equation}
For details 
we refer to~\cite[Chapter 18]{Erdelyi53}.

\subsection{Main results}

The question about a full, local Harnack inequality
\begin{equation}\label{local_harnack}
\sup_{Q^-} u \le C \inf_{Q^+} u,
\end{equation}
for nonnegative solutions of equations of type~\eqref{equation} and for some time-lagged properly scaled cylinders $Q^-$ and $Q^+$ has been a somewhat 
longstanding open question. The 
constant $C>0$ in the classical Harnack estimate~\eqref{local_harnack} should neither depend on the 
initial datum $u_0$ and the solution $u$ nor on the scaling. It turns out that for dimensions \(d\ge \beta\) this is not the case 
for the solution of~\eqref{equation}. Our first result is the construction of a 
counter-example for such a Harnack principle. We have the following theorem.

\begin{theorem}\label{counter_example}
Let $\alpha\in(0,1)$, $\beta\in(0,2]$ and $d \ge \beta$. There exists a sequence $(u_n)_{n=1}^\infty$ of smooth, positive 
solutions to equation~\eqref{equation} in $(0,\infty)\times \Rn $ (with $u_n|_{t=0}$ instead of $u_0$) such that the ratio
\begin{equation*}
\frac{u_n(t_1, 0)}{u_n(t_2, x_0)} \to \infty \quad \text{as } n \to \infty,
\end{equation*}
for each pair of time levels $t_1, t_2 >0$ and for each $x_0\neq 0$. This
contradicts the local Harnack inequality~\eqref{local_harnack}.

\end{theorem}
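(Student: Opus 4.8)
The plan is to exploit the representation formula~\eqref{solution_repr} together with the sharp two-sided bounds of Lemma~\ref{fund_sol_asympt}. The key point is that for $d \ge \beta$ and $\alpha \in (0,1)$, the fundamental solution $Z(t,\cdot)$ has a genuine singularity at the origin for \emph{every} fixed $t > 0$: in the regime $R = 2^{-\beta}|x|^\beta t^{-\alpha} \le 1$ one has $Z(t,x) \sim t^{-\alpha}|x|^{-d+\beta}$ when $d > \beta$ (which blows up as $x \to 0$ since $d - \beta > 0$), and $Z(t,x) \sim t^{-\alpha}(|\log(|x|^\beta t^{-\alpha})| + 1)$ when $d = \beta$ (which also blows up logarithmically). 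By contrast, for $x_0 \ne 0$ fixed and $t$ in a bounded range, $Z(t, x_0)$ stays bounded above by a finite constant depending only on $|x_0|$ and the time level. So if we feed the equation a sequence of initial data $u_n \ge 0$ that concentrates mass near the origin — the natural choice being $u_n = $ a mollified approximation of the Dirac mass, e.g. $u_n(x) = n^d \phi(nx)$ for a fixed nonnegative bump function $\phi$ with $\int \phi = 1$, or even more simply $u_n = c_n \mathbf 1_{B_{1/n}(0)}$ normalized to unit mass — then $u_n(t_1, 0) = \int_{\Rn} Z(t_1, -y) u_n(y)\,dy$ will pick up the singular behavior of $Z$ near the origin and diverge, while $u_n(t_2, x_0) = \int_{\Rn} Z(t_2, x_0 - y) u_n(y)\,dy \to Z(t_2, x_0) < \infty$.

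Concretely, I would proceed as follows. First, fix $t_1, t_2 > 0$ and $x_0 \ne 0$. Set $u_n(x) := n^d \phi(nx)$ with $\phi \in C_c^\infty(\Rn)$, $\phi \ge 0$, $\phi \not\equiv 0$, supported in the unit ball; then $u_n \ge 0$ and $\spt u_n \subset B_{1/n}(0)$. The mild solution $u_n(t,x) = \int Z(t,x-y)u_n(y)\,dy$ is smooth and positive on $(0,\infty) \times \Rn$ (positivity from the strict positivity of $Z$, which follows from the asymptotics; smoothness from smoothing properties of the propagator — one may also simply verify by differentiating under the integral sign using the $H$-function representation, or invoke the well-posedness results cited from~\cite{Zach05}). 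Second, for the numerator: for $n$ large enough that $B_{1/n}(0)$ lies well inside the region $\{R(t_1,\cdot) \le 1\}$, apply the lower bound from Lemma~\ref{fund_sol_asympt}(i) to get, when $d > \beta$,
\[
u_n(t_1,0) = \int_{\Rn} Z(t_1,-y)\,u_n(y)\,dy \gtrsim t_1^{-\alpha}\int_{B_{1/n}(0)} |y|^{-d+\beta}\,n^d\phi(ny)\,dy
= t_1^{-\alpha}\, n^{\,d-\beta}\!\int_{B_1(0)} |z|^{-d+\beta}\phi(z)\,dz,
\]
after the substitution $z = ny$; the remaining integral is a fixed positive constant, so $u_n(t_1,0) \gtrsim n^{d-\beta} \to \infty$. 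In the critical case $d = \beta$ one gets instead $u_n(t_1,0) \gtrsim t_1^{-\alpha}\int_{B_1(0)}(|\log(|z|^\beta n^{-\beta} t_1^{-\alpha})| + 1)\phi(z)\,dz \gtrsim \log n \to \infty$. Third, for the denominator: since $x_0 \ne 0$, for $n$ large we have $|x_0 - y| \ge |x_0|/2$ for all $y \in B_{1/n}(0)$, so $Z(t_2, x_0 - y)$ is bounded by a constant depending only on $t_2$ and $|x_0|$ (uniformly in $y$ in this ball and in $n$), using either branch of Lemma~\ref{fund_sol_asympt}; since $\int u_n = 1$ we get $u_n(t_2,x_0) \le C(t_2,|x_0|)$. (In fact $u_n(t_2,x_0) \to Z(t_2,x_0)$, but the upper bound suffices.) Combining the three steps,
\[
\frac{u_n(t_1,0)}{u_n(t_2,x_0)} \gtrsim \frac{n^{d-\beta}}{C(t_2,|x_0|)} \longrightarrow \infty \qquad (n \to \infty),
\]
and similarly with $n^{d-\beta}$ replaced by $\log n$ when $d = \beta$. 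This contradicts~\eqref{local_harnack}: any time-lagged pair of cylinders $Q^-, Q^+$ contains a point of the form $(t_1, 0)$ in the closure of $Q^-$ and $(t_2, x_0)$ with $x_0 \ne 0$ in $Q^+$ (shrink the spatial base of $Q^+$ away from the origin if necessary, or note the supremum/infimum are taken over sets of positive measure and use continuity), so $\sup_{Q^-} u_n / \inf_{Q^+} u_n \to \infty$, ruling out a uniform constant $C$.

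The main obstacle — and the only genuinely delicate point — is justifying that the numerator truly captures the singularity, i.e. that one is allowed to integrate the singular lower bound $|y|^{-d+\beta}$ (resp.\ the log) against $u_n$ and that this dominates; this requires checking that $B_{1/n}(0)$ sits inside the small-$R$ regime for $t = t_1$ (true once $n^{-\beta} < 2^\beta t_1^\alpha$, i.e. for all large $n$) and that the integrand is genuinely integrable (it is, since $-d+\beta > -d$, and the log is locally integrable). A secondary technical point is confirming smoothness/positivity of $u_n$ and that $u_n$ is a bona fide solution in the sense of~\eqref{equation} with $u_n|_{t=0}$ as initial datum — this is where one appeals to the cited well-posedness theory or directly to properties of the $H$-function propagator, and it is routine. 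One should also double-check the edge case $\beta = 2$ in the denominator, where the Gaussian-type upper bound in Lemma~\ref{fund_sol_asympt}(ii) is the relevant one for large $R$; but since $x_0$ is fixed and nonzero, $Z(t_2, x_0 - y)$ is still uniformly bounded, so nothing changes. Everything reduces to a careful bookkeeping of the asymptotics, with no deeper difficulty.
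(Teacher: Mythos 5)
Your proposal is correct and follows essentially the same route as the paper's proof: mollified Dirac data $u_n(x)=n^d\phi(nx)$ (the paper's $\eps^{-d}\psi(x/\eps)$ with $\eps=1/n$), the lower bound on $u_n(t_1,0)$ from the singular small-$R$ asymptotics of $Z$ in Lemma~\ref{fund_sol_asympt} with the cases $d>\beta$ and $d=\beta$ treated separately, and a uniform upper bound on $u_n(t_2,x_0)$ coming from the boundedness of $Z(t_2,\cdot)$ away from the origin. The only (cosmetic) differences are that the paper reduces to $|x_0|=1$ by scaling and runs the upper bound via dominated convergence to show $u_n(t_2,x_0)\to Z(t_2,x_0)$, whereas you simply combine $\|u_n\|_{L^1}=1$ with a pointwise bound on $Z$, which is slightly more direct.
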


The proof of Theorem~\ref{counter_example} is based on the detailed asymptotics of the fundamental solution \(Z\). We also provide another counter-example, which uses the Fourier transform. 

\begin{theorem}\label{counter_example_boundedness}
Let $\alpha\in(0,1)$ and $\beta\in(0,2]$. There exists a sequence \((u_n)_{n=1}^\infty\) of smooth, positive solutions to equation~~\eqref{equation} in 
$(0,\infty)\times \Rn $ (with $u_n|_{t=0}$ instead of $u_0$)
such that for all \(t_1,t_2>0\) there holds
\[
\frac{u_n(t_1,0)}{\|u_n(t_2,\cdot)\|_{L^p(\Rn)}}\to\infty\quad\text{as } n\to\infty,
\]
if
\begin{itemize}
\item[(i)] \(d>\beta\) and \(1\le p<\frac{d}{\beta}\), or
\item[(ii)] \(d=\beta\) and \(p=1\).
\end{itemize}
\end{theorem}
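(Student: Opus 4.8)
The plan is to take the counter-example sequence in the form $u_n=Z\star u_{0,n}$ (spatial convolution with the fundamental solution) for a family of smooth, nonnegative, compactly supported initial data $u_{0,n}$ modelled on the borderline singular profile $|x|^{-\beta}$, and then to play off the local singularity of $Z(t_1,\cdot)$ at the origin against the identity $\|Z(t_2,\cdot)\|_{L^1}=1$. Concretely, fix $\rho\in C_c^\infty(\Rn)$ with $\rho\ge 0$, $\rho\equiv 1$ on $B_1$ and $\spt\rho\subset B_2$, pick $\eps_n\downarrow 0$, and set $u_{0,n}(x):=\rho(x)\bigl(\eps_n^2+|x|^2\bigr)^{-\beta/2}$ in case (i) (where $d>\beta$), and $u_{0,n}(x):=\eps_n^{-d}\rho(x/\eps_n)/\|\rho\|_{L^1}$ (a standard approximate identity) in case (ii) (where $d=\beta$). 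By \lemref{fund_sol_asympt}, $Z(t,\cdot)>0$ on $\Rn$ for every $t>0$ and its only local singularity, at the origin, is of order $|x|^{\beta-d}$ (when $\beta<d$) or logarithmic (when $\beta=d$), so $Z(t,\cdot)\in L^1_{loc}(\Rn)$; together with $u_{0,n}\in C_c^\infty$, $u_{0,n}\ge 0$, $u_{0,n}\not\equiv 0$ this makes each $u_n$ a strictly positive solution of \eqref{equation} with $u_n|_{t=0}=u_{0,n}$, a priori mild and classical/smooth by the regularity theory in \cite{Zach05, Prus13}.

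First I would prove that the numerator blows up: $u_n(t_1,0)\to\infty$ as $n\to\infty$ for each fixed $t_1>0$. Writing $u_n(t_1,0)=\int_{\Rn}Z(t_1,y)\,u_{0,n}(y)\,dy$ and invoking the lower bounds of \lemref{fund_sol_asympt}(i), in case (i) we have $Z(t_1,y)\ge c\,t_1^{-\alpha}|y|^{\beta-d}$ for $|y|\le\delta_1:=\min\{1,2t_1^{\alpha/\beta}\}$, hence, using $\rho\equiv 1$ on $B_1\supset B_{\delta_1}$ and $\bigl(\eps_n^2+|y|^2\bigr)^{-\beta/2}\ge 2^{-\beta/2}|y|^{-\beta}$ for $|y|\ge\eps_n$,
\[
u_n(t_1,0)\ \ge\ c\,t_1^{-\alpha}\int_{\eps_n\le|y|\le\delta_1}|y|^{\beta-d}\bigl(\eps_n^2+|y|^2\bigr)^{-\beta/2}\,dy\ \ge\ c'\,t_1^{-\alpha}\int_{\eps_n\le|y|\le\delta_1}|y|^{-d}\,dy\ \gtrsim\ t_1^{-\alpha}\log\tfrac1{\eps_n}\ \to\ \infty ;
\]
the mechanism is that $Z(t_1,y)u_{0,n}(y)$ behaves like the non-integrable kernel $|y|^{-d}$ down to scale $\eps_n$. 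In case (ii) one has $Z(t_1,y)\ge c\,t_1^{-\alpha}\bigl|\log(|y|^\beta t_1^{-\alpha})\bigr|$ on $\spt u_{0,n}$ for $n$ large, and since $\bigl|\log(|y|^\beta t_1^{-\alpha})\bigr|\ge\tfrac{\beta}{2}\log(1/\eps_n)$ there, integrating against the approximate identity ($\int u_{0,n}=1$) yields $u_n(t_1,0)\gtrsim t_1^{-\alpha}\log(1/\eps_n)\to\infty$.

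Next I would show the denominator stays bounded; this is the step where the Fourier transform enters. From \eqref{fund_fourier_trans} and $E_\alpha(0)=1$ we get $\widehat Z(t_2,0)=(2\pi)^{-d/2}$, so, since $Z\ge 0$, $\|Z(t_2,\cdot)\|_{L^1(\Rn)}=\int_{\Rn}Z(t_2,y)\,dy=(2\pi)^{d/2}\widehat Z(t_2,0)=1$. Young's convolution inequality then gives $\|u_n(t_2,\cdot)\|_{L^p(\Rn)}\le\|Z(t_2,\cdot)\|_{L^1}\,\|u_{0,n}\|_{L^p}=\|u_{0,n}\|_{L^p(\Rn)}$. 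In case (i), since $p\beta<d$,
\[
\|u_{0,n}\|_{L^p(\Rn)}^p=\int_{B_2}\rho(x)^p\bigl(\eps_n^2+|x|^2\bigr)^{-p\beta/2}\,dx\ \le\ \|\rho\|_\infty^p\int_{B_2}|x|^{-p\beta}\,dx\ =:\ C_p^p<\infty ,
\]
a bound independent of $n$; in case (ii), $\|u_{0,n}\|_{L^1(\Rn)}=1$ by construction. Dividing the conclusion of the previous paragraph by this, for every $t_1,t_2>0$ the ratio $u_n(t_1,0)/\|u_n(t_2,\cdot)\|_{L^p(\Rn)}$ is, up to a positive constant depending only on $t_1$, at least $\log(1/\eps_n)\to\infty$, which is incompatible with any estimate $u(t_1,0)\le C\,\|u(t_2,\cdot)\|_{L^p(\Rn)}$ in which $C$ is independent of the solution.

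The part I expect to demand the most care is the numerator estimate: one must split the integral precisely at $|y|\sim\eps_n$, check that the piece $|y|<\eps_n$ contributes only $O(1)$ while the annulus $\eps_n<|y|<\delta_1$ produces the logarithmic divergence, and in case (ii) track the logarithmic weight against the approximate identity. A secondary, more routine point is verifying that the $u_n$ are genuine smooth positive solutions rather than merely mild ones; the $L^1$-mass of $Z$, Young's inequality, and the uniform $L^p$-bound on $u_{0,n}$ for $p<d/\beta$ are soft.
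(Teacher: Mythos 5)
Your argument is correct, but it takes a genuinely different route from the paper's. The paper proves Theorem~\ref{counter_example_boundedness} entirely on the Fourier side: it takes $u_0^n(x)=n^{d/p}e^{-|nx|^2}$, writes $u_n(t,0)$ via \eqref{fund_fourier_trans}, and bounds it from below using the Mittag--Leffler estimate \eqref{ML2_estimate}, $E_\alpha(-\rho)\gtrsim(1+\rho)^{-1}$. This avoids the $H$-function asymptotics entirely --- the paper explicitly advertises this as the point of the second counter-example --- and yields the polynomial blow-up rate $u_n(t,0)\gtrsim n^{d/p-\beta}$ (with an extra $\log n$ at the borderline $d=\beta$). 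You instead work in physical space, taking mollified versions of $|x|^{-\beta}$ (case (i)) or an approximate identity (case (ii)), and invoke the local singularity of $Z(t_1,\cdot)$ from Lemma~\ref{fund_sol_asympt}, obtaining a weaker logarithmic blow-up $\log(1/\eps_n)$. The denominator step is the same in both: $\|Z(t_2,\cdot)\|_{L^1}=1$ plus Young's inequality reduce matters to a uniform $L^p$ bound on the initial data, which holds exactly because $p\beta<d$ (case (i)) or by normalization (case (ii)). Your approach has the merit of reusing the same physical-space asymptotics as the first counter-example, Theorem~\ref{counter_example}, but at the cost of depending on Lemma~\ref{fund_sol_asympt}; the paper's Fourier route is more self-contained for this particular statement, and also pinpoints the exact power $n^{d/p-\beta}$ rather than a mere logarithm. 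Both are valid proofs.
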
 

In the classical local case the proof of the Harnack inequality consists of the following steps:
\begin{itemize}
\item[(a)] boundedness of subsolutions ('\(L^\infty-L^p\)-- estimate'),
\item[(b)] the weak Harnack inequality for supersolutions ('\(L^p-L^{-\infty}\)-- estimate').
\end{itemize}

In~\cite{Zach12} the weak Harnack inequality is proved in the case \(\beta=2\) for all spatial dimensions \(d\)
and all positive $p<\frac{2+d\alpha}{2+d\alpha-2\alpha}$, with the upper bound for $p$ being optimal.
In~\cite{JiaPengYang17} the weak Harnack inequality is proved in the case \(\frac{d}{2}<\beta<2\) for the spatial dimensions \(d=2,3\). Theorem~\ref{counter_example_boundedness} reveals that the boundedness estimate not only of subsolutions but also
of solutions fails in the cases
\begin{itemize}
\item[(1)] \(1<\beta\le 2\) and \(d\ge 2\),
\item[(2)] \(0<\beta\le 1\) and \(d\ge 1\).
\end{itemize}
The boundedness of solutions is an open problem in the case \(1<\beta\le 2\) and \(d=1\). Hence for subdiffusion there are cases in which the weak Harnack inequality is true but the boundedness of solutions fails. Apart from the cases \(\beta=2\) and \(d\ge 1\) or \(\frac{d}{2}<\beta<2\) and \(d=2,3\) even the weak Harnack inequality is open. 

Concerning the Harnack inequality for non-local diffusion equations, there are some positive results. In~\cite{ChLaDavi16} the Harnack inequality is proved for a generalization of~\eqref{space_fractional} in the case \(1\le\beta <2\). For the non-local in time equations, the only positive result up to now is the Harnack inequality for the Riemann-Liouville fractional derivation operator proved in~\cite{Zach11}, which can be regarded as the case '\(d=0\)'. The case \(0<\alpha<1\) in dimensions \(d\ge 1\) has been an open problem until now. We show in Theorem~\ref{harnack_local_in_time} that the Harnack inequality remains to hold for~\eqref{equation} for \(0<\alpha<1\) in the one-dimensional case provided \(\beta>1\). Having positive results, one could think that the Harnack inequality could hold also for the whole range of parameters \(\alpha\) and \(\beta\), but this is not the case as Theorem~\ref{counter_example} shows in dimensions \(d\ge\beta\).

The local Harnack inequality being settled for dimensions $d \ge \beta$, it is a natural 
question, what is a proper replacement of this result. Unlike for most non-local 
equations, the non-local memory term in the time-fractional diffusion equation has a 
substantial effect on the behavior of solutions. Indeed, large enough historical events 
affect the solution indefinitely for all positive times. From the construction of 
Theorem~\ref{counter_example} it is evident that if the initial data is pathological 
enough, then the solution will never satisfy a local Harnack inequality. If the initial 
data, however, is good enough, then a Harnack principle still holds.

Moreover, the theory is also very different depending on the dimension. In one dimension, we are still able to prove the classical Harnack inequality for global solutions, but in higher dimensions the counter-example prevails. This corresponds to the {\em critical dimension phenomenon} observed in the case of the decay estimates for the equation~\eqref{equation} in~\cite{KempSiljVergZach16}. See also~\cite{KempSiljZach16}.

For dimensions $d \ge\beta$ the equation~\eqref{equation} preserves the  
properties of the initial data in the sense that if the superharmonic function induced by 
the nonnegative initial data satisfies a Harnack principle, also the solution of the 
Cauchy problem 
will have a similar behavior. For making this rigorous, we consider the 
potential
\[
u\mapsto G\star u
\]
with the kernel
\begin{align}\label{riesz_kernel}
G(t,x) = 
\frac{1}{c(d,\beta)}\begin{cases}
|x|^{\beta-d}, \quad &\text{for } d > \beta \\
\log|x|, \quad &\text{for } d =\beta,
\end{cases}
\end{align}
where \(c(d,\beta)\) is a normalization constant. Note that for \(\beta=2\) the function \(G\) is nothing but the Newtonian kernel and the corresponding potential is the Newtonian potential. Now $G 
\star u_0$ induces a superharmonic function in $\Rn$, that is, setting $v=G\star u_0$ for a nonnegative and sufficiently
smooth initial data $u_0$ we obtain that $(-\Delta)^{\beta/2}v\ge 0$ in $\Rn$.

We have the following theorem. Here by $\chi_A(x)$ we denote the characteristic function of the set $A\subset \Rn$.

\begin{theorem}\label{main_theorem}  Let $\alpha\in(0,1)$, $\beta \in(0,2]$ and $r >0$.  
Suppose $u$ is a mild solution of the Cauchy problem~\eqref{cauchy_problem} in 
$[0,T]\times \Rn$ with the nonnegative and sufficiently smooth initial data $u_0\not\equiv0$.  Suppose further that $t_1$ and $t_2$ satisfy condition
\begin{equation}\label{time_cond}
(2r)^{\beta/\alpha} \le  t_1 < t_2 \le t_1+(2r)^{\beta/\alpha} \le T.
\end{equation}
Then 
there exists a constant $C = C(d, \alpha,\beta ,t_2/t_1) >0$ such that for all 
$x_1, x_2 \in B_{r}(0)$ there holds
\begin{equation*}
u(t_1,x_1) \le C \left[1+\frac{[G \star (u_0\chi_{B_{r_1}(x_1)})](x_1)}{[G \star 
(u_0\chi_{B_{r_2}(x_2)})](x_2)}\right] u(t_2,x_2), \quad \text{if }  d>\beta,
\end{equation*}
and, if \(d=\beta\),
\begin{equation*}
u(t_1,x_1)\le C\left[1+\frac{\big[(c+\alpha \log t_1+\beta c(d,\beta)G) \star  
\big(\chi_{B_{r_1}(x_1)}u_0\big)\big](x_1)}{\big[(c+\alpha\log t_2+\beta c(d,\beta)G) \star 
\big(\chi_{B_{r_2}(x_2)}u_0\big)\big](x_2)}
\right] u(t_2, x_2),
\end{equation*}
where \(c=1+\beta\log 2\), \(c(d,\beta)\) is the constant in~\eqref{riesz_kernel} and $r_i =2 t_i^{\alpha/\beta}, i=1,2$. Moreover, the constant \(C\) blows up, as \(t_2\to t_1\), when \(\beta=2\).
\end{theorem}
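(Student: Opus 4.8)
The plan is to argue straight from the mild-solution representation \eqref{solution_repr}, namely $u(t,x)=\int_{\Rn}Z(t,x-y)u_0(y)\,dy$, feeding in the two-sided asymptotics of $Z$ from Lemma~\ref{fund_sol_asympt}; write $R_i(y):=2^{-\beta}|x_i-y|^\beta t_i^{-\alpha}$ for the scaling variable attached to $(t_i,x_i)$. From \eqref{time_cond} I first record the elementary facts used throughout: $(2r)^{\beta/\alpha}\le t_1$ gives $2r\le t_1^{\alpha/\beta}$, hence $|x_1-x_2|<2r\le\tfrac12 r_1$ for $x_1,x_2\in B_r(0)$, while $t_2\le t_1+(2r)^{\beta/\alpha}\le 2t_1$ gives $1<t_2/t_1\le2$ and $r_1\le r_2\le 2^{\alpha/\beta}r_1$. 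A one-line computation then shows $R_i(y)<1$ for $y\in B_{r_i}(x_i)$ and $R_1(y)\ge1$ for $y\notin B_{r_1}(x_1)$, so in each integral below it is clear which branch of Lemma~\ref{fund_sol_asympt} applies. The proof consists of a lower bound for $u(t_2,x_2)$ and an upper bound for $u(t_1,x_1)=I_{\mathrm{near}}+I_{\mathrm{far}}$, obtained by breaking the integral at the ball $B_{r_1}(x_1)$: $I_{\mathrm{near}}$ will contribute the second term in the bracket and $I_{\mathrm{far}}$ the constant $1$.

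For the lower bound I keep only the part of the integral over $B_{r_2}(x_2)$, where $R_2<1$, and insert the lower bound of Lemma~\ref{fund_sol_asympt}(i) in the regime $0<\beta<d$, i.e.\ $Z(t_2,x_2-y)\gtrsim t_2^{-\alpha}|x_2-y|^{\beta-d}=t_2^{-\alpha}c(d,\beta)G(x_2-y)$ with $G$ as in \eqref{riesz_kernel} (note $c(d,\beta)>0$ and $G>0$ when $d>\beta$); this gives $u(t_2,x_2)\gtrsim t_2^{-\alpha}c(d,\beta)\,[G\star(u_0\chi_{B_{r_2}(x_2)})](x_2)$. For $I_{\mathrm{near}}$ the matching upper bound of Lemma~\ref{fund_sol_asympt}(i) yields $I_{\mathrm{near}}\lesssim t_1^{-\alpha}c(d,\beta)\,[G\star(u_0\chi_{B_{r_1}(x_1)})](x_1)$, and combining the two estimates (together with $(t_2/t_1)^\alpha\le2^\alpha$) gives, whenever the denominator is positive — the bound being vacuous otherwise —
\[
I_{\mathrm{near}}\ \lesssim\ \frac{[G\star(u_0\chi_{B_{r_1}(x_1)})](x_1)}{[G\star(u_0\chi_{B_{r_2}(x_2)})](x_2)}\,u(t_2,x_2).
\]
In the borderline dimension $d=\beta$ the kernel $|x|^{\beta-d}$ is replaced throughout by the affine--logarithmic kernel $c+\alpha\log t_i+\beta c(d,\beta)G(x)$ of the statement: by the shape of Lemma~\ref{fund_sol_asympt}(i) this quantity is comparable to $1+|\log(|x_i-y|^\beta t_i^{-\alpha})|$ on $B_{r_i}(x_i)$, and the additive shift $c+\alpha\log t_i$ is precisely what keeps it nonnegative there; otherwise the argument is unchanged.

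It remains to bound $I_{\mathrm{far}}$ by $C\,u(t_2,x_2)$, for which the target is the pointwise comparison
\begin{equation}\label{eq:farcomp}
Z(t_1,x_1-y)\ \le\ C(d,\alpha,\beta,t_2/t_1)\,Z(t_2,x_2-y)\qquad\text{for }y\notin B_{r_1}(x_1),
\end{equation}
since then $I_{\mathrm{far}}\le C\int_{\Rn}Z(t_2,x_2-y)u_0(y)\,dy=C\,u(t_2,x_2)$ by $u_0\ge0$, and adding $I_{\mathrm{near}}$ and $I_{\mathrm{far}}$ finishes the proof. Fix $y\notin B_{r_1}(x_1)$. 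If $|x_2-y|<r_2$, then both $|x_1-y|$ and $|x_2-y|$ are comparable to $t_1^{\alpha/\beta}\sim t_2^{\alpha/\beta}$ (from $r_1\le|x_1-y|\le|x_2-y|+|x_1-x_2|<r_2+\tfrac12 r_1$ and $|x_2-y|\ge|x_1-y|-|x_1-x_2|$), and evaluating each side of Lemma~\ref{fund_sol_asympt} at an argument of this size makes both sides $\sim t_1^{-\alpha d/\beta}$, so \eqref{eq:farcomp} holds with a constant depending only on $d,\alpha,\beta$. If $|x_2-y|\ge r_2$ and $\beta<2$, then Lemma~\ref{fund_sol_asympt}(ii) gives $Z(t_1,x_1-y)\sim t_1^\alpha|x_1-y|^{-d-\beta}$ and $Z(t_2,x_2-y)\sim t_2^\alpha|x_2-y|^{-d-\beta}$, and since $|x_2-y|\le|x_1-y|+|x_1-x_2|\le\tfrac32|x_1-y|$ and $t_1\le t_2$, inequality \eqref{eq:farcomp} follows once more with a dimensional constant. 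This settles every case with $\beta<2$ (including $d=\beta<2$), with $C$ depending only on $d,\alpha,\beta$.

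The case $\beta=2$ is where the real difficulty lies, and is the main obstacle. Here the polynomial tail is replaced by the Gaussian-type tail $Z(t,z)\sim t^{-\alpha d/2}R^{\gamma}e^{-\sigma R^{1/(2-\alpha)}}$ with $\gamma=\tfrac{d(\alpha-1)}{2(2-\alpha)}<0$, and a pointwise bound as in \eqref{eq:farcomp} cannot hold with a constant independent of $t_2/t_1$, because shifting the centre by the fixed amount $|x_1-x_2|$ multiplies the Gaussian by a factor growing without bound in $|y|$. The remedy is to split the region $\{y\notin B_{r_1}(x_1):\,|x_2-y|\ge r_2\}$ at the radius $|x_1-y|=M\,t_1^{\alpha/2}$, choosing $M=M(t_2/t_1)$ large enough that the monotonicity $R_1(y)\ge R_2(y)$ holds for $|x_1-y|\ge M t_1^{\alpha/2}$; this is possible since $R_1(y)\ge R_2(y)$ is implied by $|x_1-y|\,(1-(t_1/t_2)^{\alpha/2})\ge|x_1-x_2|$, that is, by $|x_1-y|\ge 2r/(1-(t_1/t_2)^{\alpha/2})$. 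Beyond this radius, $R_1\ge R_2$ together with $\gamma<0$ and $t_1^{-\alpha d/2}\le 2^{\alpha d/2}t_2^{-\alpha d/2}$ gives \eqref{eq:farcomp} with a dimensional constant; inside it, $R_1$ and $R_2$ are both bounded by a quantity comparable to $M^2$, so bounding $Z(t_1,x_1-y)$ above and $Z(t_2,x_2-y)$ below by their extreme values over that range yields \eqref{eq:farcomp} with a constant that is finite for each fixed $t_2/t_1>1$ but diverges as $t_2/t_1\to1^+$, since $M(t_2/t_1)\to\infty$. Taking the larger of the two constants gives $I_{\mathrm{far}}\le C(d,\alpha,\beta,t_2/t_1)\,u(t_2,x_2)$ with $C$ blowing up as $t_2\to t_1$, exactly as asserted. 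Thus for $\beta<2$ the far-field comparison is uniform and essentially automatic, while for $\beta=2$ one is forced into this two-scale decomposition and pays the price in $t_2/t_1$; a secondary, purely bookkeeping, point is the borderline dimension $d=\beta$, where one carries the logarithmic kernel $c+\alpha\log t_i+\beta c(d,\beta)G$ throughout and checks it remains nonnegative on the relevant balls.
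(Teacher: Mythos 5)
Your decomposition $u(t_1,x_1)=I_{\mathrm{near}}+I_{\mathrm{far}}$ at the ball $B_{r_1}(x_1)$ (equivalently at $R_1=1$), and the treatment of $I_{\mathrm{near}}$ by pairing the upper asymptotics at $(t_1,x_1)$ with the matching lower asymptotics at $(t_2,x_2)$ on $B_{r_2}(x_2)$, is the same as the paper's, and both proofs reduce $I_{\mathrm{far}}$ to a pointwise comparison $Z(t_1,x_1-y)\lesssim Z(t_2,x_2-y)$ off the inner ball. Where you genuinely diverge is in \emph{how} you obtain that pointwise bound for $\beta=2$. The paper works algebraically: it factors $Z(t_1,x_1-y)$ as $(R_1/R_2)^\gamma e^{\sigma(R_2^{1/(2-\alpha)}-R_1^{1/(2-\alpha)})}$ times $Z(t_2,x_2-y)$, uses the subadditivity $R_2^{1/(2-\alpha)}-R_1^{1/(2-\alpha)}\le(R_2-R_1)^{1/(2-\alpha)}$ (valid since $\alpha\le1$), and bounds $R_2-R_1$ by maximizing a downward parabola in $|y|$, yielding the explicit blow-up rate $((t_2/t_1)^\alpha-1)^{-1}$ in the exponent. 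You instead split the far region at a radius $Mt_1^{\alpha/2}$ with $M\sim((t_2/t_1)^{\alpha/2}-1)^{-1}$: beyond it the monotonicity $R_1\ge R_2$ together with $\gamma<0$ gives the comparison with a dimensional constant, and inside it both $R$'s live in a compact interval $[1,O(M^2)]$, so boundedness suffices. Both routes yield the asserted blow-up as $t_2\to t_1$; the paper's is sharper in the sense of giving an explicit exponential rate, while yours is more geometric and, incidentally, handles the transitional zone $\tfrac1{16}\le R_2\le1$ more transparently (the paper silently extends the large-$R$ asymptotic formula across $R_2=1$, which requires an extra word of justification). Your $\beta<2$ and $d=\beta$ bookkeeping matches the paper's. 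One small caution: the algebra pinning down the threshold $R_1\ge R_2$ should read $|x_1-y|\big((t_2/t_1)^{\alpha/2}-1\big)\ge|x_1-x_2|$ rather than the $1-(t_1/t_2)^{\alpha/2}$ variant you wrote; the conclusion is unchanged.
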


\begin{remark}\label{time_lag_remark}
Observe that by elliptic regularity theory, if $u_0 \in L^q_{loc}(\Rn)$ with $q > 
\tfrac d\beta$, then $G \star (u_0\chi_{B_{r_1}(x_1)})$ is locally bounded and continuous. In this case the convolution factor above is always bounded 
as long as $B_{r_1}(x_1) \subset B_{r_2}(x_2)$ which is true for all $(t_1, x_1), (t_2, 
x_2)$ satisfying
\begin{equation*}
t_2^{\alpha/\beta} \ge t_1^{\alpha/\beta} + \frac{|x_1-x_2|}{2}
\end{equation*}
and~\eqref{time_cond}. Therefore, the Harnack inequality is always satisfied for such an initial data after a proper time-lag corresponding to the scaling of the equation. 
\end{remark}

We would also like to point out that the proof of the above Theorem is rather sharp consisting almost exclusively of identities rather than estimates. Indeed, the crucial estimates rely on the asymptotic behavior of the fundamental solution and these estimates provide sharp behavior not only as upper bounds, but also as lower bounds.

In the above formulation the effect of the initial data is readily observable. The 
superharmonic function induced by the initial data fully determines the Harnack constant. 
Moreover, it is easy to see that if this superharmonic function satisfies a proper Harnack 
inequality, then this is true also for the solution of the evolution equation. In 
particular, if one is willing to make some more additional assumptions on the initial 
data, we are able to obtain a more classical formulation of the Harnack inequality in the 
form of the following corollary. 


\begin{corollary}\label{main_corollary}
Let the assumptions of Theorem~\ref{main_theorem} hold. Suppose also that the initial data satisfies the Harnack inequality 
\begin{equation}\label{initial_harnack}
\sup_{B_{r_1}(x_1)}u_0 \le H_0 \inf_{B_{r_2}(x_2 )} u_0,
\end{equation}
for $x_1, x_2 \in B_r(0)$ and for some constant $H_0$. Then there exists a constant $C = 
C(d, \alpha,\beta ,H_0, t_2/t_1) >0$ such that there holds
\[
u(t_1,x_1) \le C u(t_2,x_2).
\]
\end{corollary}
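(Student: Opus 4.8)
The plan is to use Theorem~\ref{main_theorem} as a black box and to bound the bracketed ``memory factor'' appearing there by a constant depending only on $d,\alpha,\beta$ and $H_0$. Since the constant in Theorem~\ref{main_theorem} already depends on $t_2/t_1$, absorbing this bound yields a new constant $C=C(d,\alpha,\beta,H_0,t_2/t_1)$ with $u(t_1,x_1)\le C\,u(t_2,x_2)$. Throughout one may assume $\inf_{B_{r_2}(x_2)}u_0>0$: if this infimum vanishes, then \eqref{initial_harnack} forces $u_0\equiv 0$ on $B_{r_1}(x_1)$, so the numerator of the memory factor is $0$ and Theorem~\ref{main_theorem} already gives the conclusion.

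\textbf{The case $d>\beta$.} Here $G(t,x)=\tfrac1{c(d,\beta)}|x|^{\beta-d}$ with $c(d,\beta)>0$, so the kernel is positive. Writing the convolution as an integral over $B_{r_1}(x_1)$ and estimating $u_0$ there from above by $\sup_{B_{r_1}(x_1)}u_0$ gives
\[
[G\star(u_0\chi_{B_{r_1}(x_1)})](x_1)\;\le\;\Big(\sup_{B_{r_1}(x_1)}u_0\Big)\,\frac{1}{c(d,\beta)}\int_{B_{r_1}(0)}|z|^{\beta-d}\,dz\;=\;\Big(\sup_{B_{r_1}(x_1)}u_0\Big)\,\frac{\kappa_{d,\beta}}{c(d,\beta)}\,r_1^{\beta},
\]
where $\kappa_{d,\beta}:=\int_{B_1(0)}|z|^{\beta-d}\,dz<\infty$ is a purely dimensional constant. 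Estimating $u_0$ from below by $\inf_{B_{r_2}(x_2)}u_0$ yields the matching lower bound for the denominator with $r_2^{\beta}$ in place of $r_1^{\beta}$. Using \eqref{initial_harnack}, the identity $r_i=2t_i^{\alpha/\beta}$, and $t_1\le t_2$,
\[
\frac{[G\star(u_0\chi_{B_{r_1}(x_1)})](x_1)}{[G\star(u_0\chi_{B_{r_2}(x_2)})](x_2)}\;\le\;\frac{\sup_{B_{r_1}(x_1)}u_0}{\inf_{B_{r_2}(x_2)}u_0}\Big(\frac{r_1}{r_2}\Big)^{\beta}\;\le\;H_0\Big(\frac{t_1}{t_2}\Big)^{\alpha}\;\le\;H_0 ,
\]
so the memory factor is at most $1+H_0$ and the conclusion follows.

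\textbf{The case $d=\beta$.} The scheme is identical, but the relevant convolution kernel
\[
\kappa_i(z):=c+\alpha\log t_i+\beta\,c(d,\beta)\,G(z)=c+\alpha\log t_i+\beta\log|z|=1+\beta\log\!\big(r_i|z|\big),\qquad c=1+\beta\log 2,
\]
changes sign near the origin. One therefore splits $B_{r_i}(0)$ into $\{\kappa_i>0\}$ and $\{\kappa_i<0\}$ and applies the upper bound $u_0\le\sup u_0$ on one part and the lower bound $u_0\ge\inf u_0\ge0$ on the other: for the numerator this produces $[\kappa_1\star(\chi_{B_{r_1}(x_1)}u_0)](x_1)\le(\sup_{B_{r_1}(x_1)}u_0)\int_{B_{r_1}(0)}\kappa_1^+$, and for the denominator the reverse one-sided bounds, combined with \eqref{initial_harnack} taken with $x_1=x_2$ (which compares $\sup_{B_{r_1}(x)}u_0$ with $\inf_{B_{r_1}(x)}u_0$, hence controls the negative contribution), produce a positive lower multiple of $\inf_{B_{r_2}(x_2)}u_0$. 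The arising integrals are elementary: with $r_i=2t_i^{\alpha/\beta}$ one finds $\int_{B_{r_i}(0)}\kappa_i=2\,|\mathbb{S}^{\beta-1}|\,r_i^{\beta}\log r_i$, and the one-sided integrals are of the same form up to fixed constants, so the ratio of kernel integrals is again bounded by a constant depending only on $d,\alpha,\beta$ and $t_2/t_1$.

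\textbf{Expected obstacle.} Everything outside the case $d=\beta$ is a direct substitution. The genuinely delicate point is precisely the sign change of the logarithmic kernel: one must arrange the one-sided estimates on $u_0$ so that the lower bound obtained for the $d=\beta$ denominator stays \emph{strictly positive} and scale-invariant, and here one may need to first normalise using the scaling symmetry $u(t,x)\mapsto u(\lambda^{\beta/\alpha}t,\lambda x)$ of \eqref{equation} (which preserves both \eqref{time_cond} and \eqref{initial_harnack} with the same $H_0$) so as to reduce to a favourable range of $r_i$, where $\int_{B_{r_i}(0)}\kappa_i>0$ and the region $\{\kappa_i<0\}$ sits well inside $B_{r_1}(x_i)$. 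This normalisation is exactly what the precise choices $c=1+\beta\log 2$ and $r_i=2t_i^{\alpha/\beta}$ inherited from Theorem~\ref{main_theorem} are tailored for.
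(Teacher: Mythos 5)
Your $d>\beta$ argument is correct and is, in essence, the paper's: the paper exploits $G\ge 0$ and $\int_{B_{r_1}(0)}G\le\int_{B_{r_2}(0)}G$ (since $r_1\le r_2$) in a single monotone chain, while you compute the two integrals explicitly. Both yield a memory factor $\le 1+H_0$, and the difference is cosmetic.

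For $d=\beta$ you are solving a problem that is not actually there. Substituting $G(z)=\log|z|/c(d,\beta)$ into the displayed formula of Theorem~\ref{main_theorem} you get $\kappa_i(z)=c+\alpha\log t_i+\beta\log|z|=1+\beta\log(r_i|z|)$, which does change sign; but this disagrees with the kernel actually produced in the proof of Theorem~\ref{main_theorem}. There, on $\{R_i\le1\}=B_{r_i}(x_i)$ one has $Z(t_i,x_i-y)\lesssim t_i^{-\alpha}\bigl[1-\log\bigl(|x_i-y|^\beta/(2^\beta t_i^\alpha)\bigr)\bigr]$, so in the variable $z=x_i-y$ the kernel is $1+\beta\log2+\alpha\log t_i-\beta\log|z|=1-\beta\log(|z|/r_i)\ge1$ on $B_{r_i}(0)$; the $+\beta c(d,\beta)G$ in the theorem's display carries a sign slip relative to~\eqref{riesz_kernel}. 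With the corrected, nonnegative kernel the $d=\beta$ case is identical to $d>\beta$: one has $\int_{B_{r_i}(0)}\bigl(1-\beta\log(|z|/r_i)\bigr)\,dz=C_\beta r_i^\beta$ with $C_\beta=\int_{B_1(0)}(1-\beta\log|w|)\,dw>0$, so the ratio of kernel integrals equals $(r_1/r_2)^\beta=(t_1/t_2)^\alpha\le1$ and no positive/negative splitting or rescaling is needed. Your route also could not have delivered the advertised constant: your $\int_{B_{r_i}(0)}\kappa_i=2|\mathbb{S}^{\beta-1}|r_i^\beta\log r_i$ gives an integral ratio $(r_1/r_2)^\beta\log r_1/\log r_2$, and $\log r_1/\log r_2$ is neither a function of $t_2/t_1$ alone nor scaling-invariant (each $\log r_i$ shifts additively under your dilation), so the plan breaks down --- a further indication that the sign is off. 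Incidentally, the paper's own proof of the corollary writes only the $G$-chain, i.e.\ treats $d>\beta$ explicitly and leaves $d=\beta$ implicit; once the kernel sign is fixed, that same chain applies verbatim.
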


In the case \(\beta>d\) the situation is simpler as the counter-example we build in Theorem~\ref{counter_example} is not valid anymore. In this case the Harnack inequality~\eqref{standard_Harnack} takes the standard form. 
We have the following theorem. We formulate the result in a form, which covers also the space-fractional case~\eqref{space_fractional}, since $\alpha=1$ is included in Lemma~\ref{fund_sol_asympt}.

\begin{theorem}\label{harnack_local_in_time}
Let $\alpha\in(0,1],\beta\in(0,2),r>0$ and \(x_1,x_2\in B_r(0)\). Assume that either 
\begin{itemize}
\item[(a)] $\alpha\neq 1$ and $1=d<\beta$, or
\item[(b)] $\alpha=1$, $\beta\neq 2$ and $d\in\mathbb{Z}_+$.
\end{itemize}
Suppose $u$ is a mild solution of the Cauchy 
problem~\eqref{cauchy_problem} in $[0,T]\times \Rn$. 
Then for all \(t_1,t_2\) satisfying
\[
(2r)^{\beta/\alpha} \le t_1 < t_2  \le t_1+(2r)^{\beta/\alpha} \le T.
\]
there exists a constant $C=C(t_2/t_1,\alpha,\beta)>0$ such that
\begin{equation}\label{standard_Harnack}
u(t_1,x_1)\le C u(t_2,x_2),
\end{equation}

Moreover, the constant blows up, as \(t_2\to t_1\), when \(\beta=2\).
\end{theorem}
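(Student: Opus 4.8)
The plan is to use the mild-solution representation~\eqref{solution_repr} together with the sharp two-sided bounds of Lemma~\ref{fund_sol_asympt}, specialized to the regime $\beta>d$ (case (a): $d=1<\beta$, $\alpha<1$) and to $\alpha=1$, $\beta\neq 2$ (case (b), which is the purely space-fractional equation where $d$ is arbitrary). In both these cases, for $R\le 1$ the fundamental solution satisfies $Z(t,x)\sim t^{-\alpha d/\beta}$, i.e.\ it is bounded above and below by a quantity depending only on $t$, not on $x$; this is the key structural feature that the counter-example of Theorem~\ref{counter_example} exploited in the opposite direction (for $d\ge\beta$). First I would fix $t_1<t_2$ and $x_1,x_2\in B_r(0)$ satisfying the scaling condition $(2r)^{\beta/\alpha}\le t_1<t_2\le t_1+(2r)^{\beta/\alpha}\le T$, and split $\R^d$ into a "near" region $N_i=\{y: R_i(y):=2^{-\beta}|x_i-y|^\beta t_i^{-\alpha}\le 1\}=\{y:|x_i-y|\le 2t_i^{\alpha/\beta}\}$ and its complement, for $i=1,2$.

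The main step is a region-by-region comparison of $u(t_1,x_1)=\int Z(t_1,x_1-y)u_0(y)\,dy$ against $u(t_2,x_2)=\int Z(t_2,x_2-y)u_0(y)\,dy$. On the near region I would use $Z(t_1,x_1-y)\sim t_1^{-\alpha d/\beta}$ and $Z(t_2,x_2-y)\sim t_2^{-\alpha d/\beta}$, so that on $N_1\cap N_2$ the integrands are comparable up to the constant $(t_2/t_1)^{\alpha d/\beta}$. The scaling condition $t_1\ge(2r)^{\beta/\alpha}$ ensures $2t_1^{\alpha/\beta}\ge 2r\ge |x_1-x_2|+\dots$ forces a containment $B_{2t_1^{\alpha/\beta}}(x_1)\subseteq B_{2t_2^{\alpha/\beta}}(x_2)$ (here $t_2>t_1$ helps), so in fact $N_1\subseteq N_2$; hence on all of $N_1$ the $t_2$-kernel is of the favourable constant type and dominates the $t_1$-contribution up to $C(t_2/t_1,\alpha,\beta)$. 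For the far region $y\notin N_1$ I would use the $R\ge 1$ asymptotics from Lemma~\ref{fund_sol_asympt}(i)/(ii): in the case $\beta<2$ one has $Z(t,x)\sim t^\alpha|x|^{-d-\beta}$, and I would bound $Z(t_1,x_1-y)\lesssim t_1^\alpha|x_1-y|^{-d-\beta}$ against $Z(t_2,x_2-y)\gtrsim t_2^\alpha|x_2-y|^{-d-\beta}$, noting that for $y$ far from both centres $|x_1-y|\sim|x_2-y|$ (since $|x_1-x_2|\le 2r$ is small compared with $|x_i-y|$), while using the monotone matching of the $R\le1$ vs.\ $R\ge1$ formulas at $R=1$ to patch the two regimes. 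The case $\alpha=1$, $\beta=2$ being excluded (it is the classical heat equation, not covered here), we never meet the Gaussian tail, which is precisely why the statement is clean; the blow-up of $C$ as $t_2\to t_1$ when $\beta=2$ is a remark for the borderline $\alpha<1,\beta=2$ part, which is not in cases (a)–(b) as stated — I would simply note the matching constant in the $R\le1$ regime for $\beta=2$ carries the factor $(t_2/t_1)^{\alpha d/2}$ multiplied by the exponential tail mismatch, and defer.

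Assembling: $u(t_1,x_1)=\int_{N_1}+\int_{\R^d\setminus N_1}\lesssim C(t_2/t_1,\alpha,\beta)\bigl(\int_{N_1}Z(t_2,x_2-y)u_0\,dy+\int_{\R^d\setminus N_1}Z(t_2,x_2-y)u_0\,dy\bigr)\le C\,u(t_2,x_2)$, using $u_0\ge 0$ to drop the positivity-preserving kernel integrals back to the full $u(t_2,x_2)$. The hard part will be the far-region estimate: one must verify that for $\beta<2$ the decaying power tail $Z(t,x)\sim t^\alpha|x|^{-d-\beta}$ really does dominate correctly in both directions \emph{uniformly} once $|x_i-y|\gtrsim t_i^{\alpha/\beta}\gtrsim 2r\gtrsim|x_1-x_2|$, so that the ratio $|x_1-y|/|x_2-y|$ stays within fixed bounds; this uses the triangle inequality together with the scaling condition, and is where the restriction $\beta<2$ (polynomial rather than Gaussian far-field) is essential. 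The dimensional hypotheses $d=1<\beta$ in (a) and $\alpha=1$ in (b) enter only to guarantee that the near-field $Z$ has the $x$-independent form $t^{-\alpha d/\beta}$ — in (a) because $\beta>d$, in (b) because $\alpha=1$ gives Gaussian-type behaviour with the same leading constant-in-$x$ order — so that the near-region comparison costs only a power of $t_2/t_1$ and no dependence on $x_1,x_2$ or on $u_0$. I expect the remaining computations (explicit exponents, the patching at $R=1$) to be routine once the region decomposition and the containment $N_1\subseteq N_2$ are in place.
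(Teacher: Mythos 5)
Your proposal takes essentially the same route as the paper: write $u$ via the mild-solution representation, split the integral at $R_1=1$ into a near region $\{R_1\le 1\}$ and a far region $\{R_1>1\}$, use the $x$-independent near-field asymptotics $Z(t,\cdot)\sim t^{-\alpha d/\beta}$ (valid precisely because $\beta>d$ in case (a), or $\alpha=1$ in case (b)), use the power-tail $Z(t,x)\sim t^\alpha|x|^{-d-\beta}$ in the far field, and control the ratio $|x_1-y|/|x_2-y|$ by the triangle inequality together with the scaling condition $(2r)^{\beta/\alpha}\le t_1$.

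However, there is one genuine error in the near-field step. You claim the scaling condition forces the ball containment $N_1=B_{2t_1^{\alpha/\beta}}(x_1)\subseteq B_{2t_2^{\alpha/\beta}}(x_2)=N_2$. This is false: the condition gives $|x_1-x_2|\le 2r\le t_1^{\alpha/\beta}$, so containment would require $3t_1^{\alpha/\beta}\le 2t_2^{\alpha/\beta}$, i.e.\ $t_2/t_1\ge(3/2)^{\beta/\alpha}$, but $t_2$ is allowed to be arbitrarily close to $t_1$. What one actually gets (and what the paper uses) is the weaker but sufficient inclusion $\{R_1\le 1\}\subseteq\{R_2\le(3/2)^\beta\}$: for $y$ with $|x_1-y|\le 2t_1^{\alpha/\beta}$ one has $|x_2-y|\le 2t_1^{\alpha/\beta}+|x_1-x_2|\le 3t_2^{\alpha/\beta}$. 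On this enlarged set one still has $Z(t_2,x_2-y)\gtrsim t_2^{-\alpha d/\beta}$ — for $R_2\le 1$ directly from the asymptotics, and for $1<R_2\le(3/2)^\beta$ by radial monotonicity of $Z$ (Lemma~\ref{lemma_Z_decreasing}) and the matching of the two asymptotic regimes at $R_2\asymp 1$ — so the near-field comparison still costs only a factor $(t_2/t_1)^{\alpha d/\beta}$. With this correction your argument goes through and reproduces the paper's proof; the rest of your decomposition and the far-field ratio bound are exactly what the paper does, including the observation that $\beta<2$ (no Gaussian tail) is why the argument is so clean and why the exceptional $\beta=2$ case is only addressed via Theorem~\ref{main_theorem}.
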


\begin{remark}

\begin{itemize}

\item[(i)] The proofs of Theorems~\ref{main_theorem} and~\ref{harnack_local_in_time} reveal that the time-lag is needed only for handling the exponential term, which is missing in the case \(\beta\neq 2\). In particular, the time lag is not needed in the case \(\alpha=1\) and \(\beta\neq 2\), which was observed also in~\cite{BonfSireVazq17}. Heuristically this means that the diffusion is so fast that  a lot of heat is diffused far away from the source at all times \(t>0\). In the probabilistic framework this is a consequence of a fat tail at infinity of the probability distribution \(Z(t,\cdot)\).

\item[(ii)] In the fully non-local case \(\alpha\neq 1\) and \(\beta\neq 2\) the diffusion is at the same time slow and fast. It is slow, since the fundamental solution has singularity at \(x=0\) for all times \(t>0\). On the other hand, it is fast since the fundamental solution \(Z(t,\cdot)\) has a fat tail at infinity.
\end{itemize}
\end{remark}


\section{The counter-examples}

We begin with the proof of Theorem~\ref{counter_example}. The proof relies on the 
properties of the fundamental solution. Indeed, the fundamental solution is not a smooth 
function and is merely superparabolic instead of being a proper solution of the problem. 
This resembles the elliptic Newtonian potential and the case of some nonlinear parabolic equations, such as the porous medium 
equation (and the corresponding Barenblatt solution), which have been used to model 
similar phenomena as equation~\eqref{equation}. Equation~\eqref{equation} can be viewed as interpolation between the parabolic and elliptic cases. 

We need the sharp behavior 
of 
the fundamental solution given in Lemma~\ref{fund_sol_asympt}. It seems that in the literature only upper bounds (see, e.g.~\cite[Proposition 1]{EideKoch04} and~\cite{KimLim15}) are available 
although the sharp asymptotics of the Fox $H$-functions is known.
For the convenience of the reader we provide here an easy argument that the fundamental solution is 
actually strictly positive on the whole space \(\mathbb{R}^d\) for all times \(t>0\). It 
is well-known that the fundamental solution is a probability density and hence 
nonnegative everywhere for all \(t>0\) but it does not exclude the possibility of the fundamental solution having zeros at some points.

We first prove the following auxiliary result. The proof is 
based on the Mellin transform of the Fox $H$-function appearing in the representation of 
the fundamental solution and the calculus of residues. For details we refer 
to~\cite{KempSiljZach16,KilbSaig04}.


\begin{lemma}\label{lemma_Z_decreasing}
Let $\alpha\in(0,1]$ and $\beta\in(0,2]$. The fundamental solution \(Z(t,\cdot)\) given by~\eqref{fund_sol_repr} is radially decreasing.
\end{lemma}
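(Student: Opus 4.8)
\emph{Proof strategy.} The plan is to work directly from the Mellin--Barnes representation \eqref{fund_sol_repr} and differentiate $Z$ in the radial variable. Fix $t>0$, write $r=|x|$ and $A:=2^{-\beta}t^{-\alpha}$, and use the parameters in \eqref{fund_sol_repr} to compute the Mellin kernel $\mathcal H^{21}_{23}$ explicitly,
\[
Z(t,x)=\pi^{-d/2}\,\frac{1}{2\pi i}\int_{\mathcal L}\mathcal H(s)\,A^{-s}\,r^{-d-\beta s}\,ds,
\qquad
\mathcal H(s)=\frac{\Gamma(\tfrac d2+\tfrac\beta2 s)\,\Gamma(1+s)\,\Gamma(-s)}{\Gamma(1+\alpha s)\,\Gamma(-\tfrac\beta2 s)}.
\]
The idea is that $\partial_r$ acting on $r^{-d-\beta s}$ produces the factor $-(d+\beta s)$, and the elementary identity $(d+\beta s)\,\Gamma(\tfrac d2+\tfrac\beta2 s)=2\,\Gamma(\tfrac{d+2}{2}+\tfrac\beta2 s)$ turns $(d+\beta s)\mathcal H(s)$ into twice the Mellin kernel of the \emph{same} fundamental solution in dimension $d+2$.

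Carrying this out and reassembling the integral (pulling out the appropriate powers of $\pi$, $A$ and $r$) yields the clean identity
\[
\partial_r Z_d(t,x)\;=\;-\,c\,|x|\,Z_{d+2}(t,x),
\]
with $c>0$ an explicit constant (a short computation gives $c=2\pi$), where $Z_{d+2}$ is \eqref{fund_sol_repr} with $d$ replaced by $d+2$. Since $Z_{d+2}(t,\cdot)$ is a probability density for every $t>0$ (recalled above), it is nonnegative, and hence $\partial_r Z(t,x)\le0$: the function $Z(t,\cdot)$ is radially non-increasing. To upgrade this to strict monotonicity one notes that $r\mapsto Z_{d+2}(t,r\omega)$ is real-analytic on $(0,\infty)$ (Fox $H$-functions are analytic in their argument) and does not vanish identically (it integrates to $1$ over space), so it has only isolated zeros; a continuous non-increasing function whose derivative is negative off a discrete set is strictly decreasing.

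I expect the main obstacle to be the rigor of the contour manipulations rather than any single deep point. One must justify differentiation under the integral sign: Stirling's formula gives, along $\mathcal L$, a bound $|\mathcal H(\sigma+i\tau)|\lesssim|\tau|^{C}e^{-\pi(1-\alpha/2)|\tau|}$, and since $\alpha\le1$ the exponent $1-\alpha/2\ge\tfrac12>0$ makes the integral (and the differentiated integral) absolutely convergent. One must also check that the \emph{same} contour $\mathcal L$ is admissible for the dimension-$(d+2)$ kernel --- it is, because passing from $d$ to $d+2$ only pushes the left pole sequence $s=-(d+2l)/\beta$ further to the left while leaving the right poles $s=0,1,2,\dots$ coming from $\Gamma(-s)$ untouched --- and one must keep careful track of the positive prefactors when re-identifying the shifted integral with $Z_{d+2}$. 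Everything else is bookkeeping.

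Two alternative routes are worth mentioning. Probabilistically, $Z(t,\cdot)$ arises from the isotropic $\beta$-stable (Gaussian, when $\beta=2$) transition densities $p_\beta(\tau,\cdot)$ by subordination, $Z(t,x)=\int_0^\infty p_\beta(\tau,x)\,\varphi_t(\tau)\,d\tau$ with a nonnegative kernel $\varphi_t$; as each $p_\beta(\tau,\cdot)$ is classically radially decreasing, so is the mixture. On the Fourier side, by \eqref{fund_fourier_trans} the profile $E_\alpha(-t^\alpha\rho^\beta)$ of $\widehat Z(t,\cdot)$ is independent of $d$, so the classical ``dimension walk'' identity for radial Fourier transforms reproduces $\partial_r Z_d=-c\,r\,Z_{d+2}$ ($c>0$) directly, and one concludes exactly as above. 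I would present the Mellin computation as the primary argument, since it stays entirely within the $H$-function framework already set up in the paper.
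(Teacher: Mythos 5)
Your proposal is correct and follows essentially the same route as the paper: both differentiate $Z$ in $r$, apply the Gamma identity $(d+\beta s)\,\Gamma(\tfrac d2+\tfrac\beta2 s)=2\,\Gamma(\tfrac{d+2}{2}+\tfrac\beta2 s)$ to recognize the result as the Mellin kernel in dimension $d+2$, and arrive at $\partial_r Z_d(t,r)=-2\pi r\,Z_{d+2}(t,r)$, concluding from the fact that $Z_{d+2}(t,\cdot)$ is a nonnegative probability density. The only cosmetic difference is that the paper invokes a ready-made differentiation rule for Fox $H$-functions (citing Lemma 2.14 of the cited reference) rather than justifying the interchange of $\partial_r$ with the Mellin--Barnes contour integral directly as you do; your Stirling estimate making that step rigorous is a welcome addition, as is your explicit remark on upgrading "non-increasing" to "strictly decreasing," which the paper defers to the following corollary.
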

 
\begin{proof}
We proceed similarly as in the proof of~\cite[Lemma 4.7]{KempSiljZach16}. 

The fundamental solution given by~\eqref{fund_sol_repr} is clearly radial. With a slight 
abuse of notation we denote \(Z(t,x)=Z(t,r)\), where \(r=|x|\). Using the differentiation 
rule~\cite[Lemma 2.14]{KempSiljZach16} of the Fox $H$-functions we have
\begin{equation}\label{Fox_derivative_r}
\frac{\partial}{\partial r} Z(t, r)= \pi^{-d/2}r^{-d-1}[\beta 
H_{34}^{22}(2^{-\beta}t^{-\alpha}r^\beta)-dH_{23}^{21}(2^{-\beta}t^{-\alpha}r^\beta)],
\end{equation}
where we have noted that \(Z\) is a radial function and denoted \(r=|x|\), and we have omitted the parameters appearing in the H-functions.

We may combine the linear combination
\begin{equation}\label{Fox_linear_combination}
\beta H^{22}_{34}(z)-d H^{21}_{23}(z)
\end{equation}
as follows. Let \(\mathcal{H}^{21}_{23}\) and \(\mathcal{H}^{22}_{34}\) be the Mellin 
transforms of the functions \(H^{21}_{23}\) and \(H^{22}_{34}\) appearing 
in~\eqref{Fox_derivative_r}. Using the definition of the Fox 
$H$-function~\cite[Formulae (2.10) and (2.11)]{KempSiljZach16} we have
\[
\beta \mathcal{H}_{34}^{22}(s)-d\mathcal{H}_{23}^{21}(s)=\beta \frac{\Gamma(\frac{d}{2}+\frac{\beta}{2}s)\Gamma(1+s)\Gamma(1-s)}{\Gamma(1+\alpha 
s)\Gamma(-\frac{\beta}{2}s)} 
-d\frac{\Gamma(\frac{d}{2}+\frac{\beta}{2}s)\Gamma(1+s)\Gamma(-s)}{\Gamma(1+\alpha 
s)\Gamma(-\frac{\beta}{2}s)}.
\]
Using the property \(\Gamma(z+1)=z\Gamma(z)\) of the Gamma function we may proceed as
\[
\begin{split}
\beta \mathcal{H}_{34}^{22}(s)-d\mathcal{H}_{23}^{21}(s)&=-2\l(\frac{d}{2}+\frac{\beta}{2}s\r)\frac{\Gamma(\frac{d}{2}+\frac{\beta}{2}
s)\Gamma(1+s)\Gamma(-s)}{\Gamma(1+\alpha s)\Gamma(-\frac{\beta}{2}s)} \\
&=-2\frac{\Gamma(\frac{d+2}{2}+\frac{\beta}{2}s)\Gamma(1+s)\Gamma(-s)}{\Gamma(1+\alpha 
s)\Gamma(-\frac{\beta}{2}s)} \\
&=-2\mathcal{H}_{23}^{21}\big[ s \big| \begin{smallmatrix}(1,1), 
&(1, \alpha)\\ 
(\frac{d+2}{2},\frac{\beta}{2}),& (1,1 ), &(1, \frac{\beta}{2})&  \end{smallmatrix} \big],
\end{split}
\]
which implies
\[
\frac{\partial}{\partial r} Z(t, r)= -2\pi^{-d/2}r^{-d-1}H_{23}^{21}\big[ 
2^{-\beta} 
t^{-\alpha}r^{\beta} \big| \begin{smallmatrix}(1,1), 
&(1, \alpha)\\ 
(\frac{d+2}{2},\frac{\beta}{2}),& (1,1 ), &(1, \frac{\beta}{2})&  \end{smallmatrix} \big].
\]
which is nothing but
\[
-2\pi r Z_{d+2}(t,r),
\]
where the subscript \(d+2\) means that \(Z_{d+2}\) is the fundamental solution 
\(Z(t,\cdot)\) in dimensions \(d+2\ge 3\). Since \(Z(t,\cdot)\) is a probability density, 
the result follows.
\end{proof}

\begin{corollary}\label{corol_no_pos_zero}
The fundamental solution \(Z(t,r)\) has no positive zeros \(r\in \R_+\).
\end{corollary}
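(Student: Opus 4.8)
The plan is to read off the corollary by a short contradiction argument, combining the radial monotonicity of Lemma~\ref{lemma_Z_decreasing} with the strict positivity of $Z(t,\cdot)$ far from the origin supplied by the large-argument asymptotics of Lemma~\ref{fund_sol_asympt}. Fix $t>0$ and suppose, for contradiction, that $Z(t,r_0)=0$ for some $r_0>0$. Since $Z(t,\cdot)$ is a probability density it is nonnegative everywhere, and by Lemma~\ref{lemma_Z_decreasing} it is non-increasing in $r$ (indeed the identity $\partial_r Z(t,r)=-2\pi r\,Z_{d+2}(t,r)$ obtained in the proof of that lemma, together with $Z_{d+2}\ge 0$, gives $\partial_r Z(t,\cdot)\le 0$). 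Hence $Z(t,r)=0$ for every $r\ge r_0$, i.e.\ $Z(t,\cdot)$ vanishes identically on $[r_0,\infty)$.

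Next I would rule this out using the tail behavior. Choosing $|x|$ large enough that $R=2^{-\beta}|x|^\beta t^{-\alpha}\ge 1$ — which holds once $|x|\ge 2t^{\alpha/\beta}$, and in particular for some $|x|>r_0$ — part (ii) of Lemma~\ref{fund_sol_asympt} gives $Z(t,x)\sim t^\alpha|x|^{-d-\beta}$ when $\beta<2$, and $Z(t,x)\sim t^{-\alpha d/2}R^{\frac{d(\alpha-1)}{2(2-\alpha)}}e^{-\sigma R^{1/(2-\alpha)}}$ when $\beta=2$, with $\sigma>0$; in either case the right-hand side, hence $Z(t,x)$, is strictly positive. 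This contradicts the vanishing of $Z(t,\cdot)$ on $[r_0,\infty)$, so $Z(t,\cdot)$ can have no positive zero.

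There is no serious obstacle here: the corollary is essentially bookkeeping on top of Lemmas~\ref{fund_sol_asympt} and~\ref{lemma_Z_decreasing}. The one point to be careful about is that Lemma~\ref{lemma_Z_decreasing} yields only weak monotonicity, so one cannot deduce positivity of $Z(t,\cdot)$ in a neighbourhood of $r_0$ from monotonicity alone; the ingredient that actually closes the argument is the strict positivity of $Z(t,\cdot)$ at infinity. As an alternative one could avoid Lemma~\ref{fund_sol_asympt} altogether by iterating $\partial_r Z_{d}=-2\pi r\,Z_{d+2}$ to see that $Z_{d+2k}(t,\cdot)$ would then be supported in $\overline{B_{r_0}}$ for every $k$, and contradicting this by a Paley--Wiener argument applied to $\widehat{Z}(t,\xi)=(2\pi)^{-d/2}E_\alpha(-|\xi|^\beta t^\alpha)$ — which fails to be smooth at $\xi=0$ when $\beta<2$ and grows faster than any exponential along imaginary directions when $\beta=2$ — but invoking the already-available asymptotics is considerably shorter.
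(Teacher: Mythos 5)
Your proof is correct but closes the argument by a different route than the paper. Both arguments share the first step: from Lemma~\ref{lemma_Z_decreasing} (equivalently, the identity $\partial_r Z(t,r)=-2\pi r\,Z_{d+2}(t,r)\le 0$ derived in its proof) together with nonnegativity, a positive zero $r_0$ forces $Z(t,\cdot)\equiv 0$ on $[r_0,\infty)$, so $Z(t,\cdot)$ would have compact support. The paper then rules this out by a function-theoretic observation: $Z(t,\cdot)$ is built from the Fox $H$-function $H^{21}_{23}(z)$, which is analytic for $z\neq 0$, and an analytic function not identically zero cannot vanish on a continuum. You instead appeal to the two-sided asymptotics of Lemma~\ref{fund_sol_asympt}(ii), which, under the paper's convention that $a\sim b$ means $c^{-1}a\le b\le ca$, yield a strictly positive lower bound on $Z(t,x)$ for all $x$ with $R\ge 1$, again contradicting compact support. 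Both closings are legitimate. The paper's appeal to analyticity is more structural and requires no quantitative tail information; yours leans on the sharp lower bounds the paper already has in hand via Lemma~\ref{fund_sol_asympt} and is more concrete, but it does rely on the two-sidedness of the $\sim$ relation. Your suggested Paley--Wiener alternative is also sound but, as you observe, unnecessary; it is not the route the paper takes either.
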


\begin{proof}
Since \(r\mapsto Z(t,r)\) is decreasing and nonnegative in \(\R_+\), it cannot have 
zeros on the positive real axis. Otherwise \(Z(t,\cdot)\) would have compact support, 
which would imply that the analytic function \(H^{21}_{23}(z)\), \(z\neq 0\), would have 
zeros in a continuum, which is impossible.
\end{proof}

Having Lemma~\ref{lemma_Z_decreasing} and Corollary~\ref{corol_no_pos_zero} 
in our hands, we can use the sharp behavior of the fundamental solution given by the 
asymptotics formulated in terms of the similarity variable 
\begin{equation*}
R= \frac{|x|^\beta}{2^\beta t^\alpha}.
\end{equation*}
Note that also the classical heat kernel contains a function depending on \(R\) with \(\alpha=1\) and \(\beta=2\).

\begin{proof}[Proof of Theorem~\ref{counter_example}]
First of all, we note that by scaling it is enough to consider \(x_0\in\partial B(0,1)\). Let $\psi(x)$ be the standard mollifier 
\begin{equation*}
\psi(x) := 
\begin{cases}
c_d e^{-\frac{1}{1-|x|^2}}, \quad \text{for } |x| <1 \\
0, \quad \text{for } |x| \ge1,
\end{cases}
\end{equation*}
where $c_d$ is a dimension-dependent normalization constant to guarantee the integral of  
$\psi$ to equal one. We choose $u_0^\eps(x) = \eps^{-d}\psi(\tfrac x\eps)$ as the initial 
data for our problem. We will find a lower bound for the left hand side 
of~\eqref{local_harnack}, which will tend to infinity as \(\eps\to 0\), and an upper 
bound of the right hand side of~\eqref{local_harnack}, which stays bounded uniformly in 
\(\eps\). We start with the lower bound, where we consider the cases \(d> \beta\) and 
\(d=\beta\) separately, since the asymptotics of the fundamental solution is different in these cases. 

\noindent {\bf The lower bound. Case: $d> \beta$.} Let \(\psi\) and \(u_0^\epsilon\) be as above and let \(t>0\) be fixed. Then 
\[
u^\eps(t,0)=\int_{\mathbb{R}^d} Z(t,y)u_0^\eps(y) dy=\int_{\mathbb{R}^d}Z(t,\eps 
z)\psi(z) dz,
\]
where we made the change of variables \(y\leftrightarrow \eps z\). Take now
\[
R_0=\min\{t^{\alpha/\beta},\frac12\},
\]
whence for \(|z|\le R_0\) there holds
\[
R:=\frac{|z|^\beta}{2^\beta t^\alpha}\le \frac{1}{2^\beta}\quad\text{and}\quad \psi(z)\ge\mathrm{e}^{-\frac43}c_d.
\]
Note that \(R\le \frac{1}{2^\beta}\) implies \(R_\eps:=\frac{|\eps z|^\beta}{2^\beta t^\alpha}\le \frac{1}{2^\beta}\) 
for all 
\(\eps\le 1\). Hence the estimate
\[
Z(t,x)\sim t^{-\alpha}|x|^{\beta-d},\quad R\le\frac{1}{2^\beta}
\]
given by Lemma~\ref{fund_sol_asympt} allows us to estimate \(u^\epsilon\) from below as
\[
u^\eps(t,0)\gtrsim\int_{|z|\le R_0} Z(t,\eps z) dz\gtrsim \eps^{\beta-d}t^{-\alpha}\int_{|z|\le R_0} |z|^{\beta-d} dz.
\]
Since \(z\mapsto |z|^{\beta-d}\) is locally integrable,
\[
u^\eps(t,0)\gtrsim \eps^{\beta-d}\to\infty,\quad\eps\to 0.
\]

\noindent {\bf The lower bound. Case: \(d=\beta\).} We proceed as above. Again, Lemma~\ref{fund_sol_asympt} gives the estimate
\[
Z(t,z)\sim t^{-\alpha}(1+|\log(R)|)\sim -t^{-\alpha}\log(R),\quad R\le \frac{1}{2^\beta},
\]
which implies
\[
u^\eps (t,0)\gtrsim\int_{|z|\le R_0} Z(t,\eps z)d z\gtrsim t^{-\alpha}\int_{|z|\le 
R_0}\left|\log\left(\frac{|\eps z|^\beta}{2^\beta t^\alpha}\right)\right|d z.
\]
Substituting \(y=\frac{\eps z}{2t^{\alpha/\beta}}\) in the last integral gives the lower bound
\[
u^\eps(t,0)\gtrsim \eps^{-d}\int_{|y|\le\frac{\eps 
R_0}{2t^{\alpha/\beta}}}|\log(|y|)|d y.
\]
But the integral on the right hand side behaves like
\[
\eps^d|\log\eps|\quad\textrm{as $\eps\to 0$},
\]
whence
\[
u^\eps(t,0)\to\infty\quad\textrm{as $\eps\to 0$}.
\]

\noindent {\bf The upper bound.} We consider a point $x_0 \in \partial 
B_1(0)$. We have
\begin{align*}
u^\eps(t,x_0) &= \int_{\Rn} Z(t, x_0 -y) u_0^\eps(y) \, dy \\
& = \eps^{-d}\int_{B_\eps(0)} Z(t, x_0 -y)\psi\big(\tfrac{y}{\eps}\big) \, dy \\
& =\int_{B_1(0)} Z(t, x_0 -\eps z)\psi(z) \, dz.
\end{align*}

For $0< \eps \le \tfrac12$, we have that $x_0 -\eps z \in B_{1/2}^c(0)$ for $z \in 
\partial B_1(0)$. Since $Z(t,\cdot)$ is continuous and bounded in 
$B_{1/2}^c(0)$ uniformly with respect to \(\eps\) and \(\psi\) is integrable, we have by the Lebesgue dominated convergence 
theorem that 
\begin{equation*}
\lim_{\eps \to 0} u^\eps(t,x_0) =\int_{B_1(0)} \lim_{\eps \to 0} Z(t, x_0 -\eps z)\psi(z) 
\, dz = Z(t, x_0) < \infty,
\end{equation*}
for all $t>0$. Since for all $t>0$ we have that $u^\eps(t,0) \to \infty$ as $\eps \to 0$, the Harnack inequality cannot hold even after an arbitrarily long time lag. This finishes the proof.
\end{proof}

\begin{proof}[Proof of Theorem~\ref{counter_example_boundedness}]
Here we do not need the exact form of the fundamental solution in terms of a quite complicated $H$-functions. Instead, working in the frequency domain we can use the simpler Mittag-Leffler function \(E_\alpha\).

Let \(u_0(x)=\mathrm{e}^{-|x|^2}\) and \(u_0^n(x)=n^{d/p}u_0(nx)\). Then by~\cite[Theorem 2.12]{KempSiljZach16} the function
\[
u_n(t,x)=\left(Z(t,\cdot)\star u_0^n\right)(x)=\int_{\Rn}Z(t,x-y)u_0^n(y)\,dy
\]
is a classical solution of~\eqref{equation}. By using the Fourier transform and~\eqref{fund_fourier_trans} we may represent \(u_n(t,0)\) in the form
\[
\begin{split}
u_n(t,0)\,=& (2\pi)^{-d/2}\mathcal{F}^{-1}\left(E_\alpha(-|\xi|^\beta t^\alpha)\widehat{u_0^n}(\xi)\right)(0)\\
=&(2\pi)^{-d} n^{\frac{d}{p}-d}\int_{\Rn} E_\alpha(-|\xi|^\beta t^\alpha)\widehat{u_0}(\xi/n)\,d\xi\\
=&(2\pi)^{-d} n^{\frac{d}{p}}\int_{\Rn} E_\alpha(-|n\xi|^\beta t^\alpha)\widehat{u_0}(\xi)\,d\xi.
\end{split}
\]
By~\eqref{ML2_estimate} we have a lower bound
\[
E_\alpha(-\rho)\ge \frac{c_1}{1+\rho},\quad \rho\ge 0,
\]
for some positive constant \(c_1\). Moreover, since the Fourier transform of the Gaussian is also Gaussian, we have \(\widehat{u_0}>0\) in \(B(0,2\delta)\) with \(\delta=t^{-\alpha/\beta}\) and fixed \(t>0\). Hence for some positive constant \(c=c(t,\alpha,\beta,d)\) there holds
\[
u_n(t,0)\ge cn^{\frac{d}{p}}\int_{B(0,2\delta)}\frac{d\xi}{1+|n\xi|^\beta t^\alpha}\gtrsim n^{\frac{d}{p}-\beta}\int_{\frac{\delta}{n}}^{2\delta} r^{d-1-\beta} dr,
\]
where in the last step we introduced the spherical coordinates. This implies
\[
u_n(t,0)\gtrsim \begin{cases}
n^{\frac{d}{p}-\beta}, &\text{if $d>\beta$},\\
n^{\frac{d}{p}-\beta}\log(n), &\text{if $d=\beta$}.
\end{cases}
\]
In particular, \(u_n(t,0)\to \infty\), if (i) \(p<\frac{d}{\beta}\) and if (ii) \(p=1\) and \(d=\beta\). On the other hand, Young's inequality for convolutions~\cite[Theorem 1.2.10]{Graf04} and the fact that \(Z(t,\cdot)\) is a probability distribution function for all \(t>0\) (see~\cite{KempSiljZach16}) imply
\[
\|u_n(t,\cdot)\|_{L^p(\Rn)}\le\|Z(t,\cdot)\|_{L^1(\Rn)}\|u_0^n\|_{L^p(\Rn)}=\|u_0\|_{L^p(\Rn)}<\infty,
\]
which finishes the proof.
\end{proof}

\section{Harnack inequality}

\begin{proof}[Proof of Theorem~\ref{main_theorem}]

For the Harnack inequality, we need to consider the value 
of the solution in two separate points, and for this purpose, we denote
\begin{equation*}
R_1:=R_1(y) = \frac{|x_1-y|^\beta}{2^\beta t_1^\alpha} \quad \text{and}\quad R_2:=R_2(y) = 
\frac{|x_2-y|^\beta}{2^\beta t_2^\alpha}.
\end{equation*}

We split the integral defining the mild solution into two parts and have

\begin{align}\label{integral_split}
u(t_1,x_1) &= \int_{\Rn} Z(t_1, x_1-y) u_0(y) \, dy \notag\\
&= \int_{\{R_1 > 1 \}} Z(t_1, x_1-y) u_0(y) \, dy +  \int_{\{R_1 \le 1\}} Z(t_1, x_1-y) 
u_0(y) \, dy     \notag\\
& =: I_1 + I_2.
\end{align}

We will use the asymptotics of \(Z\) given in Lemma~\ref{fund_sol_asympt}. We start with the integral \(I_1\) and consider only the case \(\beta=2\), since this case is more difficult. The case \(\beta\neq 2\) follows from the argument for \(\beta=2\), since for \(\beta\neq 2\) there is no exponential function in the asymptotics for large values of the similarity variable \(R_1\) and we only need the estimate for the ratio
\[
\frac{|x_1-y|}{|x_2-y|}
\]
given below.

By Lemma~\ref{fund_sol_asympt} we have
\begin{align*}
Z(t, x)
\sim t^{-\frac{\alpha d}2}R^{\frac{d(\alpha-1)}{2(2-\alpha)}}e^{-\sigma 
R^{\frac1{2-\alpha}}},\quad R\ge 1,
\end{align*}
for a constant $\sigma=\alpha^{\frac\alpha{2-\alpha}}$. Observe that for
\begin{align*}
x_1, x_2 \in B_{r}(0),\quad  R_1 \ge 1 \quad\text{and}\quad t_1 \ge (2r)^{2/\alpha}
\end{align*}
there holds
\begin{align*}
R_2 = \frac{|x_2-y|^2}{4t_2^\alpha} &\ge \frac{(|x_1-y|-|x_1-x_2|)^2}{4t_2^\alpha} \\
&\ge \frac{(2t_1^{\alpha/2}-2r)^2}{4t_2^\alpha} \\
&\ge \frac14\left(\frac{t_1}{t_2}\right)^\alpha,
\end{align*}
which implies
\begin{align}\label{inclusion}
\{R_1 \ge 1 \}  \subset \left\{R_2 \ge \frac14\left(\frac{t_1}{t_2}\right)^\alpha \right\}.
\end{align}

 We obtain
\begin{align*}
I_1 &= \int_{\{R_1 > 1 \}} Z(t_1, x_1-y) u_0(y) \, dy \\
&\lesssim  t_1^{-\frac{\alpha d}2}\int_{{\{R_1 > 1\}}} 
R_1^{\frac{d(\alpha-1)}{2(2-\alpha)}}e^{-\sigma R_1^{\frac1{2-\alpha}}} u_0(y) \, dy \\
&=  t_1^{-\frac{\alpha d}2}\int_{\{R_1 > 1 \}} 
\left(\frac{R_1}{R_2}\right)^{\frac{d(\alpha-1)}{2(2-\alpha)}}e^{\sigma 
\left(R_2^{\frac1{2-\alpha}}- 
R_1^{\frac1{2-\alpha}}\right)}R_2^{\frac{d(\alpha-1)}{2(2-\alpha)}}e^{-\sigma 
R_2^{\frac1{2-\alpha}}} u_0(y) \, dy.
\end{align*}
Next we estimate the exponential factor from above. If $R_2 \le R_1$, there is nothing to  
prove, and for $R_2 \ge R_1$ we obtain for $\alpha \in (0,1]$ that 
\begin{align*}
R_2^{\frac1{2-\alpha}}- R_1^{\frac1{2-\alpha}} &\le (R_2-R_1)^{\frac1{2-\alpha}} \\
&= \left(\frac{|x_2-y|^2}{4t_2^\alpha} -\frac{|x_1-y|^2}{4t_1^\alpha} 
\right)^{\frac1{2-\alpha}} \\
&= \left(\frac{t_1^\alpha|x_2-y|^2-t_2^\alpha|x_1-y|^2}{4t_2^\alpha t_1^\alpha} 
\right)^{\frac1{2-\alpha}} \\
&= \left(\frac{(t_1^\alpha-t_2^\alpha)|y|^2-2y \cdot (t_1^\alpha x_2-t_2^\alpha 
x_1)+t_1^\alpha|x_2|^2-t_2^\alpha|x_1|^2}{4t_2^\alpha t_1^\alpha} 
\right)^{\frac1{2-\alpha}} \\
\end{align*}


Since $ (2r)^{2/\alpha} \le t_1 < t_2 \le t_1 + (2r)^{2/\alpha}\le 2t_1 $ and 
\(x_1,x_2\in B_r(0)\), we can estimate
\[
\frac{t_1^\alpha|x_2|^2-t_2^2|x_1|^2}{4t_2^\alpha 
t_1^\alpha}\le\frac{r^2}{4t_2^\alpha}\le\frac{1}{16}.
\]
It remains to estimate the sum of the first two terms in the brackets. We estimate the 
numerator of the second term as
\[
|y\cdot (t_1^\alpha x_2-t_2^\alpha x_1)|\le |y|(1+2^\alpha)r t_1^\alpha\le 3|y|r 
t_1^\alpha. 
\]
Denote \(r=|y|\) and study the parabola
\[
r\mapsto-a r^2+br
\]
with
\[
a=\frac{t_2^\alpha-t_1^\alpha}{4t_1^\alpha t_2^\alpha}\quad\text{and}\quad b=\frac{6r 
}{4 t_2^\alpha}.
\]
Since the maximum is attained at \(r=\frac{b}{2a}\), the upper bound 
for the sum of the first two terms is
\begin{equation}\label{est_blow_up}
\frac{b^2}{4a}=\frac{36 t_1^\alpha r^2}{16 t_2^\alpha 
(t_2^\alpha-t_1^\alpha)}\le\frac{3r^2}{t_2^\alpha-t_1^\alpha}\le\frac{3 
t_1^\alpha}{4(t_2^\alpha-t_1^\alpha)}=\left(\left(\frac{t_2}{t_1}\right)^\alpha-1\right)^{-1}.
\end{equation}

Next we estimate the power term. It is enough to estimate the ratio
\[
\frac{R_1}{R_2}=\Big(\frac{t_2}{t_1}
\Big)^\alpha\frac{|x_1-y|^2}{|x_2-y|^2}.
\]
Since the exponent is negative, we need a lower bound. In the set \(\{R_1\ge 1\}\) there 
holds
\begin{equation}\label{triangle_ineq_est}
|x_2-y|\le |x_1-x_2|+|x_1-y|\le 2r+|x_1-y|\le t_1^{\alpha/2}+|x_1-y|\le\frac32|x_1-y|,
\end{equation}
which gives the desired lower bound.

Altogether we obtain
\begin{align}\label{I1_est1}
I_1 &\le   Ct_2^{-\frac{\alpha d}2}\int_{\{R_2 \ge \frac1{16}\}}  
R_2^{\frac{d(\alpha-1)}{2(2-\alpha)}}e^{-\sigma R_2^{\frac1{2-\alpha}}} u_0(y) \, dy\notag \\
&\le  C\int_{\{R_2 \ge \frac1{16}\}} Z(t_2, x_2-y)u_0(y) \, dy,
\end{align}
where we used the inclusion~\eqref{inclusion} and estimated
\[
\Big(\frac{t_1}{t_2}\Big)^\alpha\ge \frac{t_1}{t_2}\ge\frac12.
\]

The proof shows that $C$ depends only on $d, 
\alpha$ and $t_2/t_1$. Moreover, we see from~\eqref{est_blow_up} that the constant \(C\) given by the proof 
blows up as \(t_2\to t_1\).

For $I_2$ we divide the treatment into two separate cases depending on the dimension. \\

\noindent {\bf Case: $d=\beta$.}

In this case we have from Lemma~\ref{fund_sol_asympt}
\begin{align*}
Z(t_1, x_1-y) \sim t_1^{-\alpha}\left(1-\log\left(\frac{|x_1-y|^\beta}{2^\beta t_1^\alpha}\right) 
\right),\quad R_1\le 1
\end{align*}
Noting that \(c(d,\beta)=-(d\pi)^{d/2}\) in~\eqref{riesz_kernel} we can estimate
\begin{align}\label{I2_est1}
I_2 &= \int_{\{R_1 \le 1 \}} Z(t_1, x_1-y) u_0(y) \, dy \notag\\
&\le Ct_1^{-\alpha} \int_{\{R_1 \le 1 \}} \left[1- 
\log\left(\frac{|x_1-y|^\beta}{2^\beta t_1^\alpha}\right) \right]u_0(y) \, dy\notag \\
&= Ct_1^{-\alpha} \int_{\Rn} \left[1+\beta\log 2+\alpha\log t_1-\beta\log|x_1-y| 
\right]\chi_{B_{r_1}(x_1)}u_0(y) \, dy \notag\\
&= Ct_1^{-\alpha}[(1+\beta\log 2+\alpha \log t_1+\beta c(d,\beta)G) \star 
\big(\chi_{B_{r_1}(x_1)}u_0\big)](x_1)\notag\\
&\le C\frac{[(c+\alpha \log t_1+\beta c(d,\beta)G) \star \big(\chi_{B_{r_1}(x_1)}u_0 
\big)](x_1)}{[(c+\alpha \log t_2+\beta c(d,\beta) G) \star  
\big(\chi_{B_{r_2}(x_2)}u_0\big)](x_2)}\int_{\{R_2 \le 1 \}} Z(t_2, x_2-y) u_0(y) \, dy.
\end{align}
In the last step we controlled the time factor $\left(\tfrac{t_2}{t_1}\right)^\alpha$
similarly as before and denoted \(c=1+\beta\log 2\). Combining~\eqref{integral_split},~\eqref{I1_est1} and~\eqref{I2_est1} gives the claim.\\

\noindent {\bf Case: $d>\beta$.}

Now we obtain
\begin{align*}
I_2 &= \int_{\{R_1 \le 1 \}} Z(t_1, x_1-y) u_0(y) \, dy \\
&\le Ct_1^{-\alpha} \int_{\{R_1 \le 1 \}} |x_1-y|^{\beta-d}u_0(y) \, dy \\
&= C\left(\frac{t_2}{t_1}\right)^\alpha\frac{\int_{\{R_1 \le 1 \}}  |x_1-y|^{\beta-d}u_0(y) 
\, dy}{\int_{\{R_2 \le 1 \}} |x_2-y|^{\beta-d}u_0(y) \, dy} \int_{\{R_2 \le 1 \}} Z(t_2, 
x_2-y)u_0(y) \, dy,
\end{align*}
where 
\begin{align*}
t_i^{-\alpha}\int_{\{R_i \le 1 \}} |x_i-y|^{\beta-d}u_0(y) \, dy &= \int_{\Rn} 
|x_i-y|^{\beta-d}\chi_{\{B_{r_i}(x_i)\}}u_0(y) \, dy \\
&= c(d,\beta)G*(u_0\chi_{\{B_{r_i}(x_i)\}})
\end{align*}
for $r_i = 2t_i^{\alpha/\beta}, i=1,2$. We obtain
\begin{align*}
I_2 \le C\left[\frac{[G \star (u_0\chi_{B_{r_1}(x_1)})](x_1)}{[G \star 
(u_0\chi_{B_{r_2}(x_2)})](x_2)}\right] \int_{\{R_2 \le 1 \}} Z(t_2, x_2-y)u_0(y) \, dy.
\end{align*}

Combining the estimates for \(I_1\) and \(I_2\) we have obtained
\begin{align*}
u(t_1, x_1) \le C\left[1+\frac{[G \star (u_0\chi_{B_{r_1}(x_1)})](x_1)}{[G \star 
(u_0\chi_{B_{r_2}(x_2)})](x_2)}\right] u(t_2, x_2),\quad d>\beta,
\end{align*}
for a constant $C$ depending on $d, \alpha,\beta$ and $t_2/t_1$.
\end{proof}

The Corollary~\ref{main_corollary} is now an easy consequence of the above proof.

\begin{proof}[Proof of Corollary~\ref{main_corollary}]

Since $r_2 \ge r_1$, by using the 
assumption~\eqref{initial_harnack} we obtain
\begin{align*}
G \star (u_0\chi_{B_{r_2}(x_2)})(x_2) 
&=\int_{\Rn} G(y)u_0(x_2-y)\chi_{B_{r_2}(x_2)}(x_2-y) \, dy \\
&=\int_{B_{r_2}(0)} G(y)u_0(x_2-y) \, dy \\
&\ge \inf_{B_{r_2}(x_2)}u_0\int_{B_{r_2}(0)} G(y) \, dy  \\
&\ge \frac1{H_0}\sup_{B_{r_1}(x_1)}u_0\int_{B_{r_1}(0)} G(y) \, dy \\
&\ge \frac1{H_0}\int_{B_{r_1}(0)} G(y)u_0(x_1-y) \, dy \\
&= \frac1{H_0}\int_{\Rn} G(y)(u_0\chi_{B_{r_1}(x_1)})(x_1-y) \, dy\\
&=G \star (u_0\chi_{B_{r_1}(x_1)})(x_1).
\end{align*}
Now the result follows from Theorem~\ref{main_theorem}.

\end{proof}

Finally, we turn into studying the one-dimensional case. 

\begin{proof}[Proof of Theorem~\ref{harnack_local_in_time}]
%
By the definition of the mild solution we have
\[
{u}(t,x)=\int_{\R^d}Z(t,x-y){u}_0(y)\, d y
\]
for nonnegative \({u}_0$.

We evaluate \({u}\) 
at the point \((t_1,x_1)\) and divide the integration into two 
parts as before depending on whether \(R_1=\frac{|x_1-y|^\beta}{2^\beta t_1^\alpha}\le 1\) or 
\(R_1\ge1\). The argument in the case \(R_1\le 1\) is the same for all \(\beta\), but in the case \(R_1\ge 1\) the argument is different depending on whether \(\beta=2\) or not.  We give here the proof in the case \(\beta\neq 2\). The case \(\beta=2\) can be handled similarly as in the proof of Theorem~\ref{main_theorem}. 

By Lemma~\ref{fund_sol_asympt}
\[
Z(t_1,x_1-y)\sim t_1^{-\frac{\alpha d}{\beta}},\quad R_1\le 1,
\]
and
\[
Z(t_1,x_1-y)\sim 
t_1^{\alpha}|x_1-y|^{-d-\beta},\quad 
R_1\ge 1.
\]
Write
\begin{align}\label{integral_split2}
u(t_1,x_1) &= \int_{\Rn} Z(t_1, x_1-y) u_0(y) \, dy\notag \\
&= \int_{\{R_1 > 1 \}} Z(t_1, x_1-y) u_0(y) \, dy +  \int_{\{R_1 \le 1\}} Z(t_1, x_1-y) 
u_0(y) \, dy     \notag\\
& =: I_1 + I_2
\end{align}
as in the proof of Theorem~\ref{main_theorem}.

\noindent {\bf The estimate for $I_1$.} The argument is essentially the same as in the proof of Theorem~\ref{main_theorem} for the power term. We will repeat the argument here with more details. Recall
\[
(2r)^{\alpha/\beta}\le t_1<t_2\le t_1+(2r)^{\beta/\alpha},
\]
which implies
\[
t_2\le t_1+(2r)^{\beta/\alpha}\le 2t_1.
\]
Then
\[
R_2 = \frac{|x_2-y|^\beta}{2^\beta t_2^\alpha} \ge \frac{(|x_1-y|-|x_1-x_2|)^\beta}{2^\beta t_2^\alpha} \ge \frac{(2t_1^{\alpha/\beta}-2r)^\beta}{2^\beta t_2^\alpha} \ge \left(\frac{t_1}{t_2}\right)^\alpha\ge\frac12,
\]
so
\[
\{R_1\ge 1\}\subset \{R_2\ge\tfrac12\}.
\]
Hence
\begin{align*}
I_1&\lesssim t_1^\alpha\int_{\{R_1\ge 1\}}|x_1-y|^{-d-\beta}u_0(y) dy\\
&\lesssim\left(\frac{t_1}{t_2}\right)^\alpha\int_{\{R_2\ge\tfrac12\}}\left(\frac{|x_2-y|}{|x_1-y|}\right)^{d+\beta} t_2^\alpha |x_2-y|^{-d-\beta}u_0(y) dy.
\end{align*}
Since the asymptotics of \(Z\) is sharp, it remains to have a uniform bound for the ratio
\[
\frac{|x_2-y|}{|x_1-y|}.
\]
But noting that \(t_1^{\alpha/\beta}\le\frac{|x_1-y|}{2}\) in the set \(\{R_1\ge 1\}\), we may estimate similarly as in~\eqref{triangle_ineq_est} to get
\[
|x_2-y|\le\frac32|x_1-y|,
\]
which gives the desired upper bound
\begin{equation}\label{I1_est2}
I_1\lesssim\int_{\{R_2\ge\tfrac12\}}Z(t_2,x_2-y)u_0(y) dy\le u(t_2,x_2).
\end{equation}

\noindent {\bf The estimate for $I_2$.} For \(R_1\le 1\) we have
\begin{align*}
|x_2-y|\le |x_1-y|+|x_1-x_2|\le 2t_2^{\alpha/\beta}+2r\le 3 t_2^{\alpha/\beta}\Rightarrow 
R_2\le\left(\frac{3}{2}\right)^\beta
\end{align*}
and we may  estimate
\begin{equation}\label{I2_est2}
\begin{split}
I_2&=\int_{\{R_1\le 1\}} 
Z(t_1,x_1-y)u_0(y)d y\\
&\lesssim 
t_1^{-\frac{\alpha d}{\beta}}\int_{\{R_1\le 1\}}u_0(y)d 
y\\
&\lesssim\left(\frac{t_2}{t_1}\right)^\frac{\alpha d}{\beta}\int_{\left\{R_2\le\left(\frac32\right)^\beta\right\}}Z(t_2,
x_2-y) u_0(y)d y\\
&\lesssim {u}(t_2,x_2).
\end{split}
\end{equation}
Combining~\eqref{integral_split2},~\eqref{I1_est2} and~\eqref{I2_est2} completes the proof.
\end{proof}

\section*{Acknowledgements}

J. S. was supported by the Academy of Finland grant 259363 and a V\"{a}is\"{a}l\"{a} foundation travel grant.



$\mbox{}$

\noindent {\footnotesize {\bf Dominik Dier}, Institute of Applied Analysis, University of Ulm, 89069 Ulm, Germany, e-mail:dominikdi3r@gmail.com.

$\mbox{}$

\noindent  {\bf Jukka Kemppainen}, Mathematics Division,
P.O. Box 4500,
90014 University of Oulu,
Finland
e-mail: jukemppa@paju.oulu.fi

$\mbox{}$

\noindent{\bf Juhana Siljander}, Department of Mathematics and Statistics, University of Jyv\"askyl\"a, P.O. Box 35,
40014 Jyv\"askyl\"a, Finland, e-mail: juhana.siljander@jyu.fi.

$\mbox{}$

\noindent {\bf Rico Zacher}, Institute of Applied Analysis, University of Ulm, 89069 Ulm, Germany, e-mail: rico.zacher@uni-ulm.de.

}

\end{document}